\newtheorem{theorem}{Theorem}[section]
\newtheorem{proposition}[theorem]{Proposition}
\theoremstyle{remark}
\newtheorem{remark}[theorem]{Remark}
\numberwithin{equation}{section}
\author[Michael J.\ Schlosser]{Michael J.\ Schlosser$^*$}
\address{Fakult\"at f\"ur Mathematik, Universit\"at Wien,
Oskar-Morgenstern-Platz~1, A-1090 Vienna, Austria}
\email{michael.schlosser@univie.ac.at}
\urladdr{http://www.mat.univie.ac.at/{\textasciitilde}schlosse}
\thanks{$^*$Partly supported by FWF Austrian Science Fund
grant F50-08 within the SFB
``Algorithmic and enumerative combinatorics''.} 
\author[Meesue Yoo]{Meesue Yoo$^{**}$}
\address{Fakult\"at f\"ur Mathematik, Universit\"at Wien,
Oskar-Morgenstern-Platz~1, A-1090 Vienna, Austria}
\email{meesue.yoo@univie.ac.at}
\thanks{$^{**}$Fully supported by FWF Austrian Science Fund
grant F50-08 within the SFB
``Algorithmic and enumerative combinatorics''.}
\title[Elliptic product formula for augmented rook boards]{An elliptic
extension of the general product formula for augmented rook boards}
\subjclass[2010]{Primary 05A19;
Secondary 05A15, 05A30, 11B65, 11B73}
\keywords{rook numbers, $q$-analogues, elliptic extensions,
augmented rook board, general product formula}
\newcommand{\A}{\mathcal A}
\newcommand{\B}{\mathcal B}
\newcommand{\N}{\mathcal N}
\newcommand{\F}{\mathcal F}
\newcommand{\I}{\text{\sl\bf\textroundcap{\i}}}
\newcommand{\J}{\text{\sl\bf\textroundcap{\j}}}
\def\!{\mskip-\thinmuskip}
\def\,{\mskip\thinmuskip}
\def\;{\mskip\thickmuskip}
\begin{document}

%%%%%%%%%%%%%%%%%%%%%%%%%%5
 \begin{abstract}
Rook theory has been investigated by many people since its
introduction by Kaplansky and Riordan in 1946. Goldman, Joichi and 
White in 1975 showed that the sum over $k$ of the product of
the $(n-k)$-th rook numbers multiplied by the $k$-th falling factorial
polynomials factorize into a product.
In the sequel, different types of generalizations and analogues
of this product formula have been derived by various authors.
In 2008, Miceli and Remmel
constructed a rook theory model involving augmented rook boards
in which they showed the validity of a general product formula
which can be specialized to all other product formulas
that so far have appeared in the literature on rook theory. 
In this work, we construct an elliptic extension of the $q$-analogue 
of Miceli and Remmel's result. Special cases yield 
elliptic extensions of various known rook theory models.
 \end{abstract}
%%%%%%%%%%%%%%%%%%%%%%%%%%%%%5
 
 \maketitle

 %%%%%%%%%%%%%%%%%%%%%%%%%%%%%%%%%%%%%%%%%55
\section{Introduction}

%%%%%%%%%%%%%%%%%%%%%%%%%%%%%%%%%%%%%%%%%%%%%%5
% introduction to rook theory 
% elliptic functions
% definition of weight functions 

Let $\mathbb{N}$ be the set of positive integers and $[n]=\{1,2,\dots, n\}$.
A \emph{board} is a finite subset of the $\mathbb{N}\times \mathbb{N}$ grid
of squares. We label the rows with $1,2,\dots$ from bottom to top, and the 
columns from left to right. To denote the cell in the column $i$ and row $j$
we use the notation $(i,j)$.  
For a sequence of nonnegative integers $b_i\ge 0$, $i=1,\dots, n$,
let $B(b_1,\dots, b_n)$ 
denote the following set of cells 
$$B(b_1,\dots, b_n)=\{(i,j) : 1\le i\le n, ~1\le j \le b_i\}.$$
If a board $B$ can be represented by $B(b_1,\dots, b_n)$ for some $b_i$'s, 
then $B$ is called a \emph{skyline board}. Especially when $b_i$'s are  
nondecreasing, that is, $0\le b_1\le b_2\le \cdots \le b_n$,
then the board is called 
a \emph{Ferrers board}. 
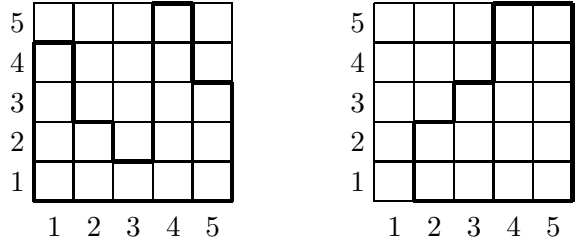
\begin{figure}[ht]

$$\begin{picture}(85,85)(0,-10)
\multiput(10,0)(0,15){6}{\line(1,0){75}}
\multiput(10,0)(15,0){6}{\line(0,1){75}}
\put(1,64){5}
\put(1,49){4}
\put(1,34){3}
\put(1,19){2}
\put(1,4){1}
\put(15, -13){1}
\put(30,-13){2}
\put(45,-13){3}
\put(60,-13){4}
\put(75,-13){5}
%\put(12,-23){$B(4,2,1,5,3)$}
\thicklines \linethickness{1.3pt}
\multiput(10,0)(0,15){1}{\line(0,1){60}}
\multiput(10,60)(0,15){1}{\line(1,0){15}}
\multiput(25,30)(0,15){1}{\line(0,1){30}}
\multiput(25,30)(0,15){1}{\line(1,0){15}}
\multiput(40,15)(0,15){1}{\line(0,1){15}}
\multiput(40,15)(0,15){1}{\line(1,0){15}}
\multiput(55,15)(0,15){1}{\line(0,1){60}}
\multiput(55,75)(0,15){1}{\line(1,0){15}}
\multiput(70,45)(0,15){1}{\line(0,1){30}}
\multiput(70,45)(0,15){1}{\line(1,0){15}}
\multiput(85,0)(0,15){1}{\line(0,1){45}}
\multiput(10,0)(0,15){1}{\line(1,0){75}}
\end{picture}
\qquad \qquad
\begin{picture}(85,85)(0,-10)
\multiput(10,0)(0,15){6}{\line(1,0){75}}
\multiput(10,0)(15,0){6}{\line(0,1){75}}
\put(1,64){5}
\put(1,49){4}
\put(1,34){3}
\put(1,19){2}
\put(1,4){1}
\put(15, -13){1}
\put(30,-13){2}
\put(45,-13){3}
\put(60,-13){4}
\put(75,-13){5}
%\put(14,-23){$B(0,2,3,5,5)$}
\thicklines \linethickness{1.3pt}
\multiput(25,0)(0,15){1}{\line(0,1){30}}
\multiput(25,30)(0,15){1}{\line(1,0){15}}
\multiput(40,30)(0,15){1}{\line(0,1){15}}
\multiput(40,45)(0,15){1}{\line(1,0){15}}
\multiput(55,45)(0,15){1}{\line(0,1){30}}
\multiput(55,75)(0,15){1}{\line(1,0){30}}
\multiput(25,0)(0,15){1}{\line(1,0){60}}
\multiput(85,0)(0,15){1}{\line(0,1){75}}
\end{picture}$$
\caption{A skyline board $B(4,2,1,5,3)$ and a Ferrers board $B(0,2,3,5,5)$.}
\end{figure}

Given a board $B$, we say that we place a rook in the $(i,j)$ cell 
for choosing the cell $(i,j)\in B$. A rook attacks the cells in the same row 
and in the same column. Thus a \emph{nonattacking} rook placement of $k$
rooks in $B$
is the subset of $k$ cells in $B$ such that no two cells have the same 
row coordinate or the same column coordinate. Let $\N _k(B)$ denote the set 
of all nonattacking rook placements of $k$ rooks in $B$, and
$r_k (B)=|\N _k(B)|$. 
Then for a Ferrers board $B=B(b_1,\dots, b_n)$, Goldman, Joichi and
White~\cite{GJW} proved that 
\begin{equation}\label{eqn:GJW}
\prod_{i=1}^n (z+b_i -i+1)=\sum_{k=0}^n r_{n-k}(B) (z)\!\downarrow_k, 
\end{equation}
where $(z)\!\downarrow_k=z(z-1)\cdots (z-k+1)$. 

Garsia and Remmel \cite{GR} developed a $q$-analogue of the rook theory 
by introducing a rook cancellation scheme. For the $q$-rook theory,
we assume that the  given board is a Ferrers board.
Given a Ferrers board $B$, a rook in $B$ cancels all the cells in
the same row to the right and the cells in the same column below it. 
For a rook placement $P\in \N _k(B)$, let $u_B (P)$ be the number of
uncancelled cells in $B-P$. The $q$-analogue of the $k$-th rook number
defined by Garsia and Remmel is 
$$r_k(q;B) =\sum_{P\in \N _k(B)}q^{u_B (P)}.$$
Then we have 
\begin{equation}\label{eqn:qrook}
 \prod_{i=1}^n [z+b_i -i+1]_q=\sum_{k=0}^n r_{n-k}(B) [z]_q\!\downarrow_k, 
\end{equation}
where $[z]_q=\frac{1-q^z}{1-q}$ and
$[z]_q\!\downarrow_k=[z]_q[z-1]_q\cdots [z-k+1]_q$. 

Garsia and Remmel \cite{GR1} also defined \emph{file numbers}.
Given a board $B$, a \emph{file placement} of $k$ rooks is a $k$-subset
of $B$ such that no two cells lie in the same column, that is, we can
choose two cells in the same row, but each column contains at most one rook.
Let $\F _k(B)$ denote the set of all $k$-file placements in $B$ and
$f_k (B)=|\F _k (B)|$. Then for any skyline board $B=B(b_1,\dots, b_n)$,
we have the product formula
\begin{equation}\label{eqn:file}
\prod_{i=1}^n (z+b_i) =\sum_{k=0}^n f_{n-k}(B) z^k, 
\end{equation}
and the $q$-analogue 
\begin{equation}\label{eqn:qfile}
\prod_{i=1}^n [z+b_i]_q =\sum_{k=0}^n f_{n-k}(q;B) [z]_q ^k, 
\end{equation}
where $f_k (q;B)$ is defined by 
$$f_k (q;B)=\sum_{P\in \F_k (B)}q^{\tilde{u}_B (P)},$$
and $\tilde{u}_B (P)$ is the number of cells lying above a rook in $P$ or cells
in columns containing no rooks. 

There have been many generalizations of rook numbers and respective product
formulas.  Goldman and Haglund~\cite{GH} introduced the $i$-creation model in
which a rook creates  $i$ new rows to the right and proved a product formula
involving the $i$-rook number. The $\alpha$-parameter 
model that Goldman and Haglund defined in \cite{GH} as well gives the
$q$-analogue of the $i$-creation model. Haglund and Remmel~\cite{HR}
considered shifted rook boards and defined the 
rook placement as a subset of some perfect matching in the complete graph. 
Remmel and Wachs \cite{RW} defined the $\J$-attacking rook model and 
proved a product formula involving factors of $\J$-differences. 
Briggs and Remmel \cite{BR0} considered the rook model corresponding to
partial permutations of the wreath product of the cyclic group of order
$m$ with the symmetric group $S_n$, $C_m\wr S_n$. 
Briggs and Remmel~\cite{BR1} also considered one more parameter $p$ and
defined a $p,q$-analogue of the rook numbers and proved a
respective product formula. 

Each of the above models can be obtained by specializing the rook model
of Miceli and Remmel~\cite{MR}. The main purpose of this work is to
construct an elliptic extension of the rook model of Miceli and Remmel
which can be specialized to give elliptic extensions of all the known
rook models mentioned above.

%%%%%%%%%%%%%%%%%%%%%%%%%%%%%

\section{Augmented rook board}

%%%%%%%%%%%%%%%%%%%%%%%%%%%%%

In this section, we review the rook theory model on augmented rook boards
defined by Miceli and Remmel~\cite{MR}.
We consider two sequences of nonnegative integers of length $n$,
$\mathcal{A}=\{a_i\}_{i=1}^{n}$ and $\mathcal{B}=\{b_i\}_{i=1}^{n}$, and two
functions $sgn, \overline{sgn}:[n]\rightarrow\{1, -1\}$. Let
$A_i =a_1+a_2+\cdots +a_i$ be the $i$-th partial sum of the $a_i$'s and
$B=B(b_1, b_2,\cdots, b_n)$. The \emph{augmented rook board}
$\mathcal{B}^{\mathcal{A}}$ is constructed by adding $A_i$ cells 
on top of $b_i$ in the $i$-th column for $i=1,\dots, n$.
Note that $\mathcal{B}^{\mathcal{A}}$ can be considered as the Ferrers board
$B(b_1+A_1, b_2+A_2,\dots, b_n+A_n)$. 
We refer to the part of the board corresponding to the $b_i$'s as the
\emph{base part} of $\mathcal{B}^{\mathcal{A}}$ and the part corresponding
to the $a_i$'s as the \emph{augmented part} of $\mathcal{B}^{\mathcal{A}}$.
Moreover, for each column $i$, $i=1,\dots, n$, we refer to the cells in
rows $b_i +1,\dots, b_i+a_1$ as the $a_1$-st part, the cells in rows
$b_i+a_1+1,\dots , b_i+a_1+a_2$ as the $a_2$-nd part, and the cells in
rows $b_i+a_{j-1}+1,\dots, b_i+a_{j-1}+a_j$ as the $a_j$-th part, in general.
Figure~\ref{fig:B^A} is an example of an augmented rook board for
$\B=(1,2,2,3)$ and $\A=(1,2,1,2)$. In the figure, the cells corresponding
to the $a_i$-th part are filled with $i$'s. 
\begin{figure}[ht]
$$\begin{picture}(40,100)(0,0)
\multiput(0,0)(0,10){3}{\line(1,0){40}}
\multiput(10,30)(0,10){4}{\line(1,0){30}}
\multiput(20,70)(0,10){1}{\line(1,0){20}}
\multiput(30,80)(0,10){3}{\line(1,0){10}}
\multiput(0,0)(0,10){1}{\line(0,1){20}}
\multiput(10,0)(0,10){1}{\line(0,1){60}}
\multiput(20,0)(0,10){1}{\line(0,1){70}}
\multiput(30,0)(10,0){2}{\line(0,1){100}}
\thicklines \linethickness{1.3pt}
\multiput(0,10)(0,10){2}{\line(1,0){10}}
\multiput(0,10)(10,0){2}{\line(0,1){10}}
\multiput(10,30)(0,10){1}{\line(0,1){30}}
\multiput(20,30)(10,0){1}{\line(0,1){40}}
\multiput(30,30)(0,10){1}{\line(0,1){70}}
\multiput(40,40)(10,0){1}{\line(0,1){60}}
\multiput(10,30)(0,10){1}{\line(1,0){20}}
\multiput(10,40)(0,10){3}{\line(1,0){30}}
\multiput(20,70)(0,10){1}{\line(1,0){20}}
\multiput(30,80)(0,10){3}{\line(1,0){10}}
\put(2,11){$1$}
\put(12,31){$1$}
\put(22,31){$1$}
\put(32,41){$1$}
\put(12,41){$2$}
\put(12,51){$2$}
\put(22,41){$2$}
\put(22,51){$2$}
\put(32,51){$2$}
\put(32,61){$2$}
\put(22,61){$3$}
\put(32,71){$3$}
\put(32,81){$4$}
\put(32,91){$4$}
\end{picture}$$
\caption{An example of $\B^{\A}$ for $\B=(1,2,2,3)$ and $\A=(1,2,1,2)$.}
\label{fig:B^A}
\end{figure}

Next we define the rook cancellation of a rook placement in
$\mathcal{B}^{\mathcal{A}}$. We consider placements $P$ of rooks in
$\mathcal{B}^{\mathcal{A}}$ with at most one rook in each column.
The leftmost rook of $P$ will cancel all the cells in the columns to its
right which correspond to the $a_s$-th part of that column of highest
index $s$. In general, each rook cancels the cells in the columns to its
right which correspond to the $a_s$-th part of that column where $s$ is
the highest index such that the cells of $a_s$-th part of that column
have not been cancelled by any rook to its left. We say that a rook
placement is \emph{nonattacking} if 
\begin{itemize}
 \item[(i)] there is at most one rook in each column, and 
 \item[(ii)] no rook lies in a cell which has been cancelled by a
rook to its left. 
\end{itemize}
Let $\mathcal{N}_k ^{\mathcal{A}}(\mathcal{B}^{\mathcal{A}})$ denote the set of
nonattacking rook placements of $k$ rooks in $\mathcal{B}^{\mathcal{A}}$. Define 
\begin{equation}
 r_k ^{\mathcal{A}} (\mathcal{B}^{\mathcal{A}}, sgn, \overline{sgn})=
\sum_{P\in \mathcal{N}_k ^{\mathcal{A}}(\mathcal{B}^{\mathcal{A}})}
w_{sgn, \overline{sgn}, \mathcal{B}^{\mathcal{A}}}(P),
\end{equation}
where 
$$
w_{sgn, \overline{sgn}, \mathcal{B}^{\mathcal{A}}}(P)=
\prod_{c\in P}w_{sgn, \overline{sgn}, \mathcal{B}^{\mathcal{A}},P}(c),
$$
for
$$
w_{sgn, \overline{sgn}, \mathcal{B}^{\mathcal{A}},P}(c)=
\left\{\begin{array}{ll}sgn(i),&
\text{if $c$ is in column $i$ and in the base part of
$\mathcal{B}^{\mathcal{A}}$,}\\
-\overline{sgn}(s),& \text{ if $c$ is in the $a_s$-th part of the
augmented part of $\mathcal{B}^{\mathcal{A}}$.}
\end{array}\right.
$$
Then Miceli and Remmel \cite{MR} proved the following theorem.

\begin{theorem}\cite[Theorem 3.1]{MR}\label{thm:MR}
 Suppose $\mathcal{A}=(a_1,\dots, a_n)$ and $\mathcal{B}=(b_1,\dots, b_n)$
are two sequences of nonnegative integers and
$sgn:\{1,\dots, n\}\rightarrow \{1, -1\}$
 and $\overline{sgn}:\{1,\dots, n\} \rightarrow \{1, -1\}$ are two
sign functions. Then, 
 \begin{equation}
  \prod_{i=1}^{n}(z+sgn(i)b_i)=
\sum_{k=0}^n r_{n-k}^{\mathcal{A}}(\mathcal{B}^{\mathcal{A}}, sgn, \overline{sgn})
\prod_{j=1}^k (z+ \sum_{s\le j}\overline{sgn}(s)a_s).
 \end{equation}
\end{theorem}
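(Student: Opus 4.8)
The plan is to realize both sides as two evaluations of a single signed sum over labelled rook placements on a $z$-enlarged board, the agreement of the two evaluations being forced by a sign-reversing involution. Since both sides are polynomials in $z$ of degree $n$, it suffices to prove the identity for every positive integer $z$ and then invoke that two polynomials agreeing at infinitely many points coincide. So I would fix $z\in\mathbb{N}$ and enlarge $\B^{\A}$ by adjoining, in each of the $n$ columns, a block of $z$ uncancellable cells of weight $+1$ below the base part, the \emph{$z$-part}. Then I would introduce the set $\mathcal{T}$ of placements of exactly one rook per column in which each rook also carries a label $N$ or $F$, subject to the rule that an $N$-rook sits in the base part or in an uncancelled augmented part of $\B^{\A}$, whereas an $F$-rook sits in the $z$-part or in an augmented part; the weight of a configuration is the product of its cell weights, an augmented cell of the $a_s$-th part being weighted $-\overline{sgn}(s)$ under an $N$-rook and $+\overline{sgn}(s)$ under an $F$-rook.

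These conventions are designed so that the total signed weight of $\mathcal{T}$ reproduces the right-hand side. Grouping the configurations by the number $k$ of $F$-rooks, the $N$-rooks occupy the complementary $n-k$ columns and form a nonattacking placement in $\B^{\A}$, contributing exactly $r_{n-k}^{\A}(\B^{\A},sgn,\overline{sgn})$, while the $k$ $F$-rooks, read in increasing column order, contribute $\prod_{j=1}^{k}(z+\sum_{s\le j}\overline{sgn}(s)a_s)$, where by convention the $j$-th $F$-rook may be placed in the $z$-part or in any augmented part of index $s\le j$. Because this second factor depends only on $k$, summing over $k$ yields precisely $\sum_{k}r_{n-k}^{\A}(\B^{\A},sgn,\overline{sgn})\prod_{j=1}^{k}(z+\sum_{s\le j}\overline{sgn}(s)a_s)$, the right-hand side.

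To evaluate the same sum a second way, I would exploit that each augmented cell of the $a_s$-th part enters with weight $-\overline{sgn}(s)$ as an $N$-cell and with the opposite weight $+\overline{sgn}(s)$ as an $F$-cell. Accordingly I would define an involution $\iota$ on $\mathcal{T}$ that singles out a distinguished augmented rook, say the one in the leftmost column whose rook lies in the augmented part, and toggles its label between $N$ and $F$, leaving the remaining rooks fixed. Such a toggle reverses the sign of the weight while preserving its absolute value, so every configuration using at least one augmented cell cancels in pairs. The fixed points are exactly the configurations with no augmented rook: in each column the rook lies either in the base part (forced to be an $N$-rook, of weight $sgn(i)$) or in the $z$-part (forced to be an $F$-rook, of weight $+1$), and since base cells are never cancelled these column choices are independent. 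Their total signed weight is $\prod_{i=1}^{n}(z+sgn(i)b_i)$, the left-hand side.

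The main obstacle is to make $\iota$ genuinely well defined. Toggling one rook's label changes $k$, hence shifts the column-ranks $j$ of the other $F$-rooks and alters the cancellation seen by the other $N$-rooks; one must verify that the distinguished rook always occupies simultaneously an uncancelled augmented part (legal as an $N$-rook) and an accessible augmented part of index $s\le j$ (legal as an $F$-rook), and that $\iota$ is a sign-reversing bijection of $\mathcal{T}$ to itself of order two. Reconciling the highest-uncancelled-index cancellation rule governing the $N$-rooks with the cumulative accessibility $\sum_{s\le j}$ governing the $F$-rooks is the delicate technical heart of the argument; the case $n=1$, where the single augmented part switches between weights $-\overline{sgn}(1)a_1$ and $+\overline{sgn}(1)a_1$ and cancels, already exhibits the mechanism in miniature. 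Once the involution is checked, comparing the two evaluations of the signed weight of $\mathcal{T}$ gives the identity for all $z\in\mathbb{N}$, and polynomiality completes the proof.
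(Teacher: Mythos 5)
Your global strategy is essentially the one Miceli and Remmel use, and which this paper recaps in the proof of the elliptic Theorem~\ref{thm:elptMR}: count placements with one rook per column of a $z$-extended board in two ways, arranging for each augmented cell of the $a_s$-th part to be available once with weight $-\overline{sgn}(s)$ and once with weight $+\overline{sgn}(s)$ --- your $F$-labelled augmented cells are exactly their ``lower augmented part''. Your evaluation of the right-hand side is sound. The gap is on the left-hand side: the involution $\iota$ that toggles the label of the \emph{leftmost} augmented rook is not well defined on $\mathcal{T}$. Toggling $N\to F$ only relaxes the constraints on the rooks to its right (each later $F$-rook's rank $j$ grows by one, and each later column loses one cancelling rook), but toggling $F\to N$ tightens them, and any rook to the right sitting in the extremal admissible part becomes illegal. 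Concretely, take $n=2$, $a_1=a_2=1$, $b_1=b_2=0$ and any $z$, with an $F$-rook in the $a_1$-cell of column $1$ (rank $1$, legal since $s=1\le 1$) and an $F$-rook in the $a_2$-cell of column $2$ (rank $2$, legal since $s=2\le 2$): relabelling the column-$1$ rook as $N$ makes it cancel the $a_2$-part of column $2$ and simultaneously drops the second rook's rank to $1$, so the image lies outside $\mathcal{T}$ and $\iota$ is not an involution. The step you yourself flag as the ``delicate technical heart'' is exactly where the argument fails as written.

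The repair is to distinguish the \emph{rightmost} augmented rook instead. Everything to its right then sits in the base part or the $z$-part, which no toggle can affect, while for the distinguished rook itself the two legality conditions coincide: if it sits in the $a_s$-part of column $l$ and $m$ of the columns $1,\dots,l-1$ carry $N$-rooks, then ``uncancelled as an $N$-rook'' and ``$s\le j$ as an $F$-rook'' both read $s\le l-m$, so the toggle is a legal, weight-negating, order-two map whose fixed points give $\prod_{i=1}^n(z+sgn(i)b_i)$ as you describe. Equivalently, you can avoid the involution altogether and argue column by column as in the proof of Theorem~\ref{thm:elptMR}: whatever cancellation state a column inherits from its left, its upper and lower augmented contributions cancel in pairs and the admissible cells sum to $z+sgn(i)b_i$ independently of that state, so the total telescopes to the left-hand side. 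With either of these changes your proof closes.
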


We refer to the paper of Miceli and Remmel~\cite{MR} for the detailed
proof of Theorem~\ref{thm:MR}, but we introduce the extended augmented
board $\B_z ^\A$ for later use. 
Given two sequences of nonnegative integers $\A$, $\B$ and a nonnegative
integer $z$, the board $\B_z ^\A$ consists of three parts.
We start with the board $\B ^\A$ which we refer to as the \emph{upper part}
of $\B _z ^\A$. Within the upper part, the cells corresponding to the
board $B=B(b_1, \dots, b_n)$ will be called the \emph{base part} of
$\B_z ^\A$ and the part corresponding to the $a_i$'s will be called the
\emph{upper augmented part} of $\B_z ^\A$. Directly below $\B ^\A$,
we attach $n$-columns of length $z$ which will be referred to as the
\emph{$z$-part} of $\B _z ^\A$. Finally, directly below the $z$-part,
we place the reflected Ferrers board $B(A_1, \dots, A_n)$ which will be called 
the \emph{lower augmented part} of $\B_z ^\A$. We call the line separating
the $z$-part and the upper part $\B^\A$ the \emph{high bar} and the line
separating the lower augmented part and the $z$-part the \emph{low bar}. 
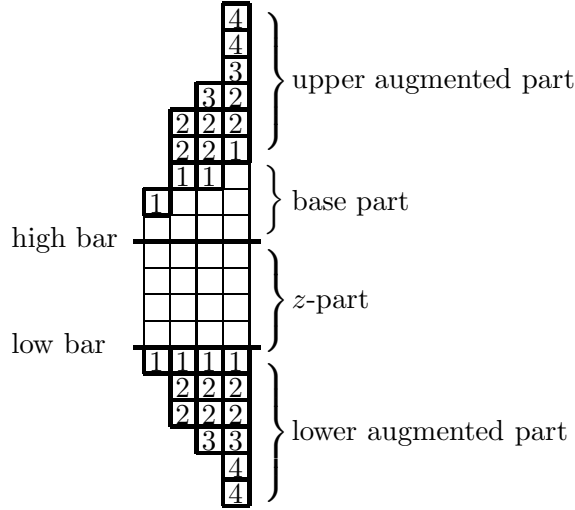
\begin{figure}[ht]
$$\begin{picture}(100,190)(30,0)
\multiput(80,0)(0,10){2}{\line(1,0){10}}
\multiput(70,20)(0,10){1}{\line(1,0){20}}
\multiput(60,30)(0,10){2}{\line(1,0){30}}
\multiput(50,50)(0,10){8}{\line(1,0){40}}
\multiput(60,130)(0,10){3}{\line(1,0){30}}
\multiput(70,160)(0,10){1}{\line(1,0){20}}
\multiput(80,170)(0,10){3}{\line(1,0){10}}
\multiput(50,50)(10,0){1}{\line(0,1){70}}
\multiput(60,30)(10,0){1}{\line(0,1){120}}
\multiput(70,20)(10,0){1}{\line(0,1){140}}
\multiput(80,0)(10,0){2}{\line(0,1){190}}
\thicklines \linethickness{1.3pt}
\multiput(46,60)(0,40){2}{\line(1,0){48}}
\thicklines \linethickness{1.28pt}
%%%%%%%%%%%%%%%%%%%%%%%%%%%%%%%%%%%%%%%%%%%%%%%%%%%%%%
\multiput(50,110)(0,10){2}{\line(1,0){10}}
\multiput(50,110)(10,0){2}{\line(0,1){10}}
\multiput(60,120)(0,10){1}{\line(0,1){30}}
\multiput(70,120)(10,0){1}{\line(0,1){40}}
\multiput(80,120)(0,10){1}{\line(0,1){70}}
\multiput(90,130)(10,0){1}{\line(0,1){60}}
\multiput(60,120)(0,10){1}{\line(1,0){20}}
\multiput(60,130)(0,10){3}{\line(1,0){30}}
\multiput(70,160)(0,10){1}{\line(1,0){20}}
\multiput(80,170)(0,10){3}{\line(1,0){10}}
%%%%%%%%%%%%%%%%%%%%%%%%%%%%%%%%%%%%%%%%%%%%%%%%%%%%
\multiput(50,50)(0,10){1}{\line(1,0){40}}
\multiput(60,30)(0,10){2}{\line(1,0){30}}
\multiput(70,20)(0,10){1}{\line(1,0){20}}
\multiput(80,0)(0,10){2}{\line(1,0){10}}
\multiput(50,50)(10,0){1}{\line(0,1){10}}
\multiput(60,30)(10,0){1}{\line(0,1){30}}
\multiput(70,20)(10,0){1}{\line(0,1){40}}
\multiput(80,0)(10,0){2}{\line(0,1){60}}
%%%%%%%%%%%%%%%%%%%%%%%%%%%%%%%%%%%%%%%%%%%%%
\put(73, 158){$\left.\begin{array}{c}\quad\\\\\\\\ \end{array}\right\}$}
\put(106, 158){upper augmented part}
\put(73, 113){$\left.\begin{array}{c}\quad\\\\ \end{array}\right\}$}
\put(106,112){base part}
\put(73, 75){$\left.\begin{array}{c}\quad\\\\\\ \end{array}\right\}$}
\put(106,75){$z$-part}
\put(73, 25){$\left.\begin{array}{c}\quad\\\\\\\\ \end{array}\right\}$}
\put(106, 25){lower augmented part}
\put(0, 58){low bar}
\put(0, 98){high bar}
\put(52,111){$1$}
\put(62,121){$1$}
\put(72,121){$1$}
\put(82,131){$1$}
\put(62,131){$2$}
\put(62,141){$2$}
\put(72,131){$2$}
\put(72,141){$2$}
\put(82,141){$2$}
\put(82,151){$2$}
\put(72,151){$3$}
\put(82,161){$3$}
\put(82,171){$4$}
\put(82,181){$4$}
%%%%%%%%%%%%%%%%%%%%%%%%%
\put(52,51){$1$}
\put(62,51){$1$}
\put(72,51){$1$}
\put(82,51){$1$}
\put(62,41){$2$}
\put(62,31){$2$}
\put(72,41){$2$}
\put(72,31){$2$}
\put(82,41){$2$}
\put(82,31){$2$}
\put(72,21){$3$}
\put(82,21){$3$}
\put(82,1){$4$}
\put(82,11){$4$}
\end{picture}$$
\caption{An example of an extended augmented rook board}\label{fig:extaugbd}
\end{figure}
Figure~\ref{fig:extaugbd} shows an example of an extended augmented
rook board for $\B=(1,2,2,3)$, $\A=(1,2,1,2)$ and $z=4$. The cells in
the upper augmented part and lower augmented part 
are filled with $i$ if they correspond to the $a_i$-part in each column. 

In \cite{MR}, Miceli and Remmel also prove the following $q$-analogue of
Theorem~\ref{thm:MR}.

\begin{theorem}\cite[Theorem 4.2]{MR}\label{thm:MRq}
 Suppose $\A=(a_1,\dots, a_n)$ and $\B=(b_1, \dots, b_n)$ are two sequences
of nonnegative integers and $sgn:\{1,\dots, n\}\rightarrow \{1, -1\}$ and 
 $\overline{sgn}:\{1,\dots, n\}\rightarrow \{1, -1\}$ are two sign functions.
Then,
 \begin{equation}\label{eqn:MRq}
  \prod_{i=1}^n [z+sgn(i)b_i]_q =
\sum_{k=0}^n R_{n-k}^\A (\mathcal{B}^{\mathcal{A}}, sgn, \overline{sgn},q)
\prod_{j=1}^k ([z+\sum_{s\le j}\overline{sgn}(s)a_s]_q).
 \end{equation}
\end{theorem}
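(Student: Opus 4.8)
The plan is to prove \eqref{eqn:MRq} by a two-way enumeration of signed, $q$-weighted full rook placements on the extended augmented board $\B_z^\A$, refining the combinatorial proof of Theorem~\ref{thm:MR} by the Garsia--Remmel cancellation statistic. Throughout I would take $z$ to be a nonnegative integer, so that the $z$-part of $\B_z^\A$ is a genuine board with $z$ rows; since, for fixed $\A$, $\B$, $sgn$ and $\overline{sgn}$, both sides of \eqref{eqn:MRq} are polynomials in $q^z$ of degree at most $n$ (with coefficients rational in $q$), an identity valid for all $z\in\mathbb{N}$ forces \eqref{eqn:MRq} as an identity of formal expressions. On $\B_z^\A$ I place exactly one rook in each of the $n$ columns, subject to the augmented cancellation scheme (a rook cancels, in every column to its right, the highest-index $a_s$-part not yet cancelled, in both the upper and the lower augmented parts simultaneously, by the reflection), and I weight each placement $P$ by $\prod_{c\in P} w(c)\,q^{u(c)}$, where $w(c)$ carries the sign $sgn(i)$ for a base-part rook and $-\overline{sgn}(s)$ for a rook in an $a_s$-part, and $q^{u(c)}$ records the uncancelled cells in the prescribed Garsia--Remmel manner relative to the two bars.

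First I would evaluate the total weight column by column, from left to right, to recover the left-hand side $\prod_{i=1}^n [z+sgn(i)b_i]_q$. Because the cancellation removes entire $a_s$-parts rather than single rows and leaves the base part untouched, the columns decouple once the sign-weighted augmented contributions are resolved: in each column the $z$-part contributes $[z]_q$, the base part contributes the $sgn(i)$-signed cells, and the cells of the upper augmented part must be matched against those of the reflected lower augmented part. The key device here is a sign-reversing, $q$-weight-preserving involution that pairs a placement using an augmented cell with the placement obtained by reflecting that rook through the $z$-part across the two bars; this cancels all contributions in which some column's rook sits in an augmented part, and what survives telescopes to the single factor $[z+sgn(i)b_i]_q$ per column. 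The sign $sgn(i)$ is exactly what lets the possibly negative quantity $[z-b_i]_q$ be realized combinatorially, the lower augmented part supplying the cells of opposite sign.

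Next I would evaluate the same total weight by splitting each placement according to the number $n-k$ of rooks lying above the high bar, i.e.\ inside the upper part $\B^\A$. By definition, refined by the statistic $u$, these rooks contribute $R_{n-k}^\A(\B^\A, sgn, \overline{sgn},q)$. The remaining $k$ rooks lie strictly below the high bar, in the $z$-part or the lower augmented part, one in each of $k$ distinct columns; enumerating them in the order imposed by the cancellation, the $j$-th such rook can reach the $z$ cells of its $z$-part together with exactly the lower augmented parts of indices $s\le j$ still available to it. Summing its signed $q$-weight over these positions yields the factor $[z+\sum_{s\le j}\overline{sgn}(s)a_s]_q$, the term $\overline{sgn}(s)a_s$ arising from the $a_s$ cells of the $s$-th lower augmented part, the reflection together with the cell-sign $-\overline{sgn}(s)$ producing the correct net sign. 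Taking the product over $j=1,\dots,k$ and summing over $k$ would give the right-hand side of \eqref{eqn:MRq}, and equating the two evaluations would complete the proof.

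The main obstacle is the second and third steps at the level of $q$: one must pin down the statistic $u$ and the two bars so that (a) the augmented involution in the first count is genuinely weight-preserving, not merely sign-reversing, and (b) the below-the-bar rooks factor with precisely the exponents that turn the $q=1$ counts $z+\sum_{s\le j}\overline{sgn}(s)a_s$ of Theorem~\ref{thm:MR} into the bracketed $q$-integers $[z+\sum_{s\le j}\overline{sgn}(s)a_s]_q$. In particular the bookkeeping of which $a_s$-parts have been cancelled, and how the $q$-powers shift when a rook is moved between the upper and lower augmented parts, has to be tracked exactly; once the statistic is chosen to make the $q=1$ bijections weight-compatible, both evaluations follow as above and the theorem is the resulting equality.
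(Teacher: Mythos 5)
Your proposal follows essentially the same route as the paper: the paper proves the elliptic generalization (Theorem~\ref{thm:elptMR}), of which \eqref{eqn:MRq} is a degenerate limit, by exactly this two-way count of one-rook-per-column placements on $\B_z^\A$ -- evaluating column by column on one side (with the upper augmented part cancelling against the reflected lower augmented part) and splitting at the high bar on the other. The single obstacle you flag, namely pinning down the $q$-statistic, is resolved in the paper not by a global uncancelled-cell count but by assigning a fixed weight ($q^j$, or its elliptic analogue $W_{a,b;q,p}(j)$) to each individual cell and taking the product of the rooks' cell weights, so that each column sum is literally a bracketed number; the residual factors coming from $[z]_q-[y]_q=q^{y}[z-y]_q$ (elliptically, \eqref{eqn:zrecgen}) are then absorbed into the normalization of $R_{n-k}^{\A}$ relative to the raw weighted sum $M\!R_{n-k}^{\A}$, cf.\ \eqref{eqn:Rdef}.
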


Here, $R_{n-k}^\A (\mathcal{B}^{\mathcal{A}}, sgn, \overline{sgn},q)$ is a
specially defined $q$-analogue of
$r_{n-k}^{\mathcal{A}}(\mathcal{B}^{\mathcal{A}}, sgn, \overline{sgn})$.
The proof of Theorem~\ref{thm:MRq} can be done by assigning $q$-weights
to the cells of the extended augmented rook board $\B_z ^\A$.
We will see the detailed proof in the process of 
proving an elliptic extension of \eqref{eqn:MRq}.

%%%%%%%%%%%%%%%%%%%%%%%%%%%%%%%%%%%%%%%%%%%%%%%%%%%%%%%%%%%%%%%%%%%%%%%%5

\section{Elliptic extension}

%%%%%%%%%%%%%%%%%%%%%%%%%%%%%%%%%%%%%%%%%%%%%%%%%%%%%%%%%%%%%%%%%%%%%%%%5

In this section, we derive an elliptic extension of Theorem~\ref{thm:MRq}.
We first briefly explain the notion of an elliptic function.
The multiplicative notation we adopt is common in the context of
elliptic hypergeometric series, cf.\ \cite[Chapter~11]{GRhyp}.

A complex function is called \emph{elliptic}
if it is doubly periodic and meromorphic. 
Since elliptic functions can be built from quotients of theta functions,
we define and use theta functions to construct elliptic functions. 

Define a \emph{modified Jacobi theta function} by 
$$
\theta(x;p)=\prod_{j\ge 0} (1-p^j x)(1-p^{j+1}/x),\quad \theta(x_1,\dots, x_m ;p)
=\prod_{k=1}^m \theta(x_k ;p),
$$
where $x, x_1,\dots, x_m\ne 0$ and $|p|< 1$. Note that this definition is 
based on Jacobi's triple product identity. We also define an 
\emph{elliptic shifted factorial} analogue of the $q$-shifted factorial by 
$$(a;q,p)_n =\begin{cases}
             \prod_{k=0}^{n-1} \theta(a q^k ;p), & n=1,2,\dots, \\
             1, & n=0,\\
             1/\prod_{k=0}^{-n-1}\theta(a q^{n+k};p), & n=-1, -2, \dots,
             \end{cases}$$
and let 
$$(a_1,a_2,\dots, a_m;q,p)_n=\prod_{k=1}^n (a_k;q,p)_n,$$
where $a, a_1,\dots, a_m\ne 0$. The parameter $q$ is called the \emph{base} and 
$p$ is called the \emph{nome}. Note that $\theta(x;0)=1-x$ and thus
$(a;q,0)_n =(a;q)_n$.
Among many identities involving the Jacobi theta functions and the
elliptic shifted factorials
(see \cite[(11.2.42)--(11.2.60)]{GRhyp}), the following
\emph{addition formula} is crucial 
in the theory of elliptic hypergeometric series 
\begin{equation}\label{eqn:theta_addition}
\theta(xy, x/y, uv, u/v;p)-\theta(xv, x/v, uy, u/y;p)=
\frac{u}{y}\theta(yv, y/v, xu, x/u;p).
\end{equation}

We next define the \emph{elliptic weights} which are an elliptic extension
of the $q$-weights.
Let 
\begin{equation}\label{eqn:sw}
w_{a,b;q,p}(k)=\frac{\theta(aq^{2k+1}, bq^k, a q^{k-2}/b;p)}
{\theta(a q^{2k-1}, bq^{k+2}, a q^k/b;p)}q, 
\end{equation}
and 
\begin{equation}\label{eqn:bw}
W_{a,b;q,p}(k)=\frac{\theta(a q^{2k+1},bq,bq^2,aq^{-1}/b, a/b;p)}
{\theta(aq, bq^{k+1},bq^{k+2},aq^{k-1}/b, aq^k /b;p)}q^k.
\end{equation}
Note that for a positive integer $k$, the elliptic weights are related as
$$
W_{a,b;q,p}(k)=\prod_{j=1}^k w_{a,b;q,p}(j),
~\text{ or }~
w_{a,b;q,p}(k)=\frac{W_{a,b;q,p}(k)}{W_{a,b;q,p}(k-1)}.
$$
We also have 
\begin{subequations}
\begin{equation}
w_{a,b;q,p}(k+n)=w_{aq^{2k}, bq^k;q,p}(n)
\end{equation}
and 
\begin{equation}\label{eqn:Wid}
W_{a,b;q,p}(k+n)=W_{a,b,q,p}(k)W_{aq^{2k},bq^k;q,p}(n).
\end{equation}
\end{subequations}
\begin{remark}
If we let $p\to 0$, $a\to 0$ and $b\to 0$, in this order
(or, $p\to 0$, $b\to 0$ and $a\to \infty$ in this order),
then we recover the $q$-weights 
$$
w_{0,0;q,0}(k)=q ~\text{ and }~ W_{0,0;q,0}(k)=q^k.
$$
We can easily verify that the weights $w_{a,b;q,p}(k)$ and $W_{a,b;q,p}(k)$  
are indeed elliptic. More precisely, if we let $q=e^{2\pi i\sigma}$,
$p=e^{2\pi i\tau}$, $a=q^\alpha$, $b=q^{\beta}$
for $\sigma, \tau, \alpha, \beta \in \mathbb{C}$, then $w_{a,b;q,p}(k)$ is 
periodic in $\alpha$ with periods $\sigma^{-1}$ and $\tau \sigma^{-1}$. 
As a function in $\beta$ (or $k$) the same applies to $w_{a,b;q,p}(k)$.
\end{remark}

We define an \emph{elliptic number} of $z$ by
\begin{equation}\label{eqn:elptnumber}
[z]_{a,b;q,p}=\frac{\theta(q^z, a q^z, b q^2, a/b;p)}
{\theta(q, aq, b q^{z+1}, aq^{z-1}/b;p)}. 
\end{equation}
It can be verified that the elliptic numbers satisfy 
\begin{subequations}
\begin{equation}\label{eqn:zrec}
[z]_{a,b;q,p}=[z-1]_{a,b;q,p}+W_{a,b;q,p}(z-1)
\end{equation}
using the addition formula \eqref{eqn:theta_addition} and more generally, 
\begin{equation}\label{eqn:zrecgen}
[z]_{a,b;q,p}=[y]_{a,b;q,p}+W_{a,b;q,p}(y)[z-y]_{a q^{2y},bq^y ;q,p}
\end{equation}
which reduces to \eqref{eqn:zrec} for $y=z-1$.
\end{subequations} 
Hence
\begin{subequations}
\begin{equation}\label{eqn:zsum}
[z]_{a,b;q,p}=1+W_{a,b;q,p}(1)+\cdots +W_{a,b;q,p}(z-1),
\end{equation}
where $W_{a,b;q,p}(0)=1$.
From \eqref{eqn:zrec} we can also deduce
\begin{equation}\label{eqn:-zsum}
-[-z]_{a,b;q,p}=W_{q,b;q,p}(-1)+W_{a,b;q,p}(-2)+\cdots +W_{a,b;q,p}(-z+1)+W_{q,b;q,p}(-z).
\end{equation}
\end{subequations}
We remark that the definitions of the elliptic weights $w_{a,b;q,p}(k)$,
$W_{a,b;q,p}(k)$ and the elliptic numbers $[z]_{a,b;q,p}$ originate
from the elliptic binomial coefficients 
\begin{equation}\label{eqn:elptbinom}
\begin{bmatrix}n\\k\end{bmatrix}_{a,b;q,p}:=
\frac{(q^{1+k},aq^{1+k},bq^{1+k},aq^{1-k}/b;q,p)_{n-k}}
{(q,aq,bq^{1+2k},aq/b;q,p)_{n-k}},
\end{equation}
as defined by one of us in \cite{Schl1}.
It is easy to see that the expression in \eqref{eqn:elptbinom} reduces
to the usual $q$-binomial coefficient
if one lets $p\to 0$, $a\to 0$ and $b\to 0$.
The elliptic binomial coefficients admit a combinatorial interpretation
in terms of \emph{weighted lattice paths} in $\mathbb{Z}^2$.
Consider lattice paths $P$ from $(0,0)$ to $(k, n-k)$ consisting of east
and north steps only. For each horizontal step $(s-1,t)\to (s,t)$,
assign the weight $W_{a q^{s-1}, bq^{2s-2};q,p}(t)$ and $1$ to each
vertical step. If we define the weight of path $P$, denoted by $wt(P)$,
to be the product of all the weights of the respective steps of the path, 
then the elliptic binomial coefficient is the weight generating function
of all the paths from $(0,0)$ to $(k,n-k)$, i.e.,
$$
\begin{bmatrix}n\\k\end{bmatrix}_{a,b;q,p}=
\sum_{P\in \mathcal P ((0,0)\to (k,n-k))}wt(P),
$$
where $\mathcal P (A \to B)$ is the set of all the lattice paths from
$A$ to $B$. The proof of this identity is based on the recurrence relation 
\begin{equation}\label{eqn:recw}
\begin{bmatrix}n+1\\k\end{bmatrix}_{a,b;q,p}=
\begin{bmatrix}n\\k\end{bmatrix}_{a,b;q,p}
+\begin{bmatrix}n\\k-1\end{bmatrix}_{a,b;q,p}
\,W_{aq^{k-1},bq^{2k-2};q,p}(n+1-k)
\qquad \text{for $n,k\in\mathbb N \cup \{0\}$}
\end{equation}
with the initial conditions 
\begin{equation}
\begin{bmatrix}n\\0\end{bmatrix}_{a,b;q,p}=1,\qquad
\begin{bmatrix}n\\k\end{bmatrix}_{a,b;q,p}=0
\qquad\text{for\/ $n\in\mathbb N \cup \{0\}$, and\/
$k\in-\mathbb N$ or $k>n$}.
\end{equation}
If we let $k=1$ in \eqref{eqn:recw}, then we get 
$$\begin{bmatrix}n+1\\1\end{bmatrix}_{a,b;q,p}=
\begin{bmatrix}n\\1\end{bmatrix}_{a,b;q,p}
+
\,W_{a,b;q,p}(n)$$
which, by comparing to \eqref{eqn:zrec}, shows that the elliptic number
$[n]_{a,b;q,p}$ is equal to $\begin{bmatrix}n\\1\end{bmatrix}_{a,b;q,p}$.

Now we are ready to construct an elliptic extension of Theorem~\ref{thm:MRq}.
%%%%%%%%%%%%%%%%%%%%%%%%%%%%%%%%%%%%%%%%%%%%%%%%%%%%%%%%%%%%%%%%%%%%%%%%%%%%%%%
We remark that the way to assign the weights to the cells in $\B_z ^\A$
is similar to the way done in \cite{MR}. The main idea is to extend the
$q$-number (or $p,q$-number) to the elliptic number.

Consider two sequences $\A=(a_1,\dots, a_n)$, $\B=(b_1,\dots, b_n)$ and 
two sign functions $sgn:\{1,\dots, n\}\rightarrow \{1, -1\}$,
$\overline{sgn}:\{1,\dots, n\}\rightarrow \{1, -1\}$. We let
$\overline{A}_s:= \sum_{i=1}^s \overline{sgn}(i)a_i$.
We assign the elliptic weight $M_{a,b;q,p}(\B_z ^\A, sgn, \overline{sgn};c)$
to each cell $c \in \B_z ^\A$ according to the following scheme. 
\begin{itemize}
 \item[(i)] For each $i$, the cells $c$ in the $i$-th column of the $z$-part
of $\B_z ^\A$ have weights 
 $$1, W_{a,b;q,p}(1), W_{a,b;q,p}(2), \dots, W_{a,b;q,p}(z-1),$$ reading 
 from bottom to top. 
 \item[(ii)] For each $i$, the cells $c$ in the $i$-th column of the base part
of $\B_z ^\A$ have weights 
 $$\left\{ \begin{array}{ll}-1, -W_{a,b;q,p}(1), -W_{a,b;q,p}(2),
\dots,- W_{a,b;q,p}(b_i-1),& \text{ if } sgn(i)=-1 \\ 
 W_{a,b;q,p}(-1), W_{a,b;q,p}(-2), \dots, W_{a,b;q,p}(-b_i), &
\text{ if } sgn(i)=1, \end{array}\right. $$
 reading from bottom to top.
 \item[(iii)] For each $i$, we assign the elliptic weights to the
cells $c$ in the $i$-th column of the lower augmented part as follows.
First, to the cells in the $a_1$-st part of column $i$, we assign the weights 
 $$\left\{ \begin{array}{ll}-1, -W_{a,b;q,p}(1), -W_{a,b;q,p}(2),
\dots,- W_{a,b;q,p}(a_1-1),& \text{ if } \overline{sgn}(1)=-1 \\ 
 W_{a,b;q,p}(-1), W_{a,b;q,p}(-2), \dots, W_{a,b;q,p}(-a_1), &
\text{ if } \overline{sgn}(1)=1, \end{array}\right. $$
 reading from top to bottom. Note that the sum of the weights of the
cells in $a_1$-st part becomes $-[-\overline{sgn}(1)a_1]_{a,b;q,p}$. 
 In the case when $sgn(1)=1$, we used the identity \eqref{eqn:-zsum}.
 Suppose that we have assigned the weights to cells in the $a_j$-th part
of column $i$ in the lower augmented part for $j=1,\dots, s$ so that 
 the sum of the weights of cells that lie in the $a_j$-th part of column
$i$ for $j\le s$ is $-[-\overline{A}_s]_{a,b;q,p}$. 
 Then we assign the weights to the cells in the $a_{s+1}$-st part of column $i$
 in the lower augmented part according to the following cases. 
 \begin{description}
 \item[Case 1] $0\le \overline{A}_s \le \overline{A}_{s+1}$\\
 In this case, the weights of the cells in the $a_{s+1}$-st part are,
reading from top to bottom,
 $$W_{a,b;q,p}(-\overline{A}_s-1), W_{a,b;q,p}(-\overline{A}_s -2),
\dots, W_{a,b;q,p}(-\overline{A}_{s+1}).$$
 \item[Case 2] $0\le \overline{A}_{s+1} < \overline{A}_s$\\
 Then the weights of the cells in the $a_{s+1}$-st part are
 $$-W_{a,b;q,p}(-\overline{A}_s), -W_{a,b;q,p}(-\overline{A}_s +1),
\dots, -W_{a,b;q,p}(-\overline{A}_{s+1}-1),$$
 reading from top to bottom.
 \item[Case 3] $\overline{A}_{s+1} < 0\le \overline{A}_s$\\
 In this case, the weights of the cells in the $a_{s+1}$-st part are 
 \begin{multline*}
 \qquad\qquad -W_{a,b;q,p}(-\overline{A}_s ), -W_{a,b;q,p}(-\overline{A}_s +1),
\dots, -W_{a,b;q,p}(-1),-1,\\
 -W_{a,b;q,p}(1), \dots, -W_{a,b;q,p}(-\overline{A}_{s+1} -1),
 \end{multline*}
 reading from top to bottom.
 \item[Case 4] $\overline{A}_{s+1}\le \overline{A}_s\le 0$\\
 Then the weights of the cells in the $a_{s+1}$-st part are,
reading from top to bottom,
 $$-W_{a,b;q,p}(-\overline{A}_s), -W_{a,b;q,p}(-\overline{A}_s +1),
\dots, -W_{a,b;q,p}(-\overline{A}_{s+1} -1).$$
 \item[Case 5] $\overline{A}_s  < \overline{A}_{s+1}\le 0$\\
 In this case, the weights of the cells in the $a_{s+1}$-st part are 
 $$ W_{a,b;q,p}(-\overline{A}_s -1), W_{a,b;q,p}(-\overline{A}_s -2),
\dots, W_{a,b;q,p}(-\overline{A}_{s+1} ),$$
 reading from top to bottom.
 \item[Case 6] $\overline{A}_s \le 0  < \overline{A}_{s+1}$\\
 Then, the weights of the cells in the $a_{s+1}$-st part are
 \begin{multline*}
 \qquad\qquad W_{a,b;q,p}(-\overline{A}_s -1), W_{a,b;q,p}(-\overline{A}_s -2),
\dots, W_{a,b;q,p}(1),1,\\
 W_{a,b;q,p}(-1), W_{a,b;q,p}(-2),\dots, W_{a,b;q,p}(-\overline{A}_{s+1}),
 \end{multline*}
 reading from top to bottom.
 \end{description}
 The weights are defined so that the sum of the weights of cells
that lie in the $a_j$-th part of column $i$ for $j\le s+1$ 
 also becomes $-[-\overline{A}_{s+1}]_{a,b;q,p}$. 
 \item[(iv)] For each $i$, the weight of the cell in the $r$-th row
of the $i$-th column of the upper augmented part, reading from the bottom, 
is equal to $-1$ times the weight of the cell in the $r$-th row of $i$-th
column of the lower augmented board, reading from the top. That is, the weight
of the cell in the upper augmented part of column $i$ is just the negative
of the weight of its corresponding cell in the lower augmented part.
 \end{itemize}
Suppose that $P\in \N_k ^{\A}(\B^\A)$ has rooks in cells $c_1,\dots, c_k$.
Then we set 
\begin{equation}\label{eqn:Mdef}
M_{a,b;q,p}(\B^\A, sgn, \overline{sgn};P)=\prod_{i=1}^k
M_{a,b;q,p}(\B_z ^\A, sgn, \overline{sgn};c_i),
\end{equation}
and we define 
\begin{equation}\label{eqn:MRdef}
M\!R_k ^\A (a,b;q,p; \B^\A, sgn, \overline{sgn})=
\sum_{P\in \N_k ^{\A}(\B^\A)}M_{a,b;q,p}(\B^\A, sgn, \overline{sgn};P),
\end{equation}
with $M\!R_0 ^\A (a,b;q,p; \B^\A, sgn, \overline{sgn})=1$.
Finally we define 
\begin{align}\label{eqn:Rdef}
 &R_{n-k}^\A (a,b;q,p; \B^\A, sgn, \overline{sgn})\notag\\
 &=\frac{\prod_{s\le k}W_{a,b;q,p}(- \overline{A}_s)}
{\prod_{i=1}^n W_{a,b;q,p}(-sgn(i)b_i) }
 M\!R_{n-k} ^\A (a,b;q,p; \B^\A, sgn, \overline{sgn}).\qquad
\end{align}
Then we can prove the following elliptic extension of Theorem~\ref{thm:MRq}.

\begin{theorem}\label{thm:elptMR}
Suppose that two sequences of nonnegative integers $\A =(a_1, \dots, a_n)$,
$\B=(b_1, \dots, b_n)$ and two sign functions
$sgn:\{1,\dots, n\}\rightarrow \{1, -1\}$,  
$\overline{sgn}:\{1,\dots, n\}\rightarrow \{1, -1\}$ are given. Then, 
\begin{align}\label{eqn:elptMR}
& \prod_{i=1}^n [z+sgn(i)b_i]_{a q^{-2 sgn(i)b_i},b q^{-sgn(i)b_i};q,p}\notag\\
&=\sum_{k=0}^n R_{n-k}^\A (a,b;q,p; \B^\A, sgn, \overline{sgn})
 \prod_{1 \le j\le k}[z+\overline{A}_j]_{aq^{-2\overline{A}_j},bq^{-\overline{A}_j};q,p}.
\end{align}
\end{theorem}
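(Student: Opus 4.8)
The plan is to mimic the two-way count on the extended board $\B_z^\A$ that establishes the $q$-case in Theorem~\ref{thm:MRq}, the only genuinely new ingredient being that the telescoping of $q$-integers is replaced by the elliptic recurrence \eqref{eqn:zrecgen}. Replacing $y$ by $-y$ there yields, for every integer $y$,
\begin{equation}\label{eqn:keyfactor}
W_{a,b;q,p}(-y)\,[z+y]_{aq^{-2y},bq^{-y};q,p}=[z]_{a,b;q,p}-[-y]_{a,b;q,p}.
\end{equation}
Writing $X:=[z]_{a,b;q,p}$, $\beta_i:=[-sgn(i)b_i]_{a,b;q,p}$ and $\gamma_j:=[-\overline{A}_j]_{a,b;q,p}$, and applying \eqref{eqn:keyfactor} with $y=sgn(i)b_i$ to the left-hand side of \eqref{eqn:elptMR} and with $y=\overline{A}_j$ to the product on its right-hand side, the prefactors produced by \eqref{eqn:keyfactor} are precisely the normalizing weights collected in $R^\A_{n-k}$ through \eqref{eqn:Rdef}. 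After the common factor $\big(\prod_{i=1}^nW_{a,b;q,p}(-sgn(i)b_i)\big)^{-1}$ is cancelled, \eqref{eqn:elptMR} becomes equivalent to the reduced identity
\begin{equation}\label{eqn:reduced}
\prod_{i=1}^n (X-\beta_i)=\sum_{k=0}^n M\!R^\A_{n-k}(a,b;q,p;\B^\A,sgn,\overline{sgn})\prod_{j=1}^k (X-\gamma_j),
\end{equation}
in which every elliptic number now carries the common base $(a,b)$. Since $M\!R^\A_0=1$, proving \eqref{eqn:elptMR} amounts to showing that $M\!R^\A_{n-k}$ is the coefficient of $\prod_{j\le k}(X-\gamma_j)$ in the expansion of $\prod_i(X-\beta_i)$.

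The reduced identity \eqref{eqn:reduced} is the elliptic counterpart of the combinatorial identity underlying Theorem~\ref{thm:MRq}, and I would prove it by the same extended-board count as in \cite{MR}, once three weight identifications are in place. By \eqref{eqn:zsum} the $z$-part of any column has weight-sum $X$; by \eqref{eqn:zsum} or \eqref{eqn:-zsum} (according to $sgn(i)$) the base part of column $i$ has weight-sum $-\beta_i$, so $X-\beta_i$ is the total weight of column $i$ restricted to its $z$-part and base part; and, by rule (iii), the first $j$ augmented parts of a lower-augmented column sum to $-\gamma_j$, so $X-\gamma_j$ is the weight of the $z$-part together with those $j$ parts. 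Granting these, the left-hand side of \eqref{eqn:reduced} is the generating function for one rook per column ranging over the $z$-part and base part, while on the right-hand side one records the sub-placement $P'\in\N^\A_{n-k}(\B^\A)$ of the rooks lying in the upper part $\B^\A$ — which contributes $M\!R^\A_{n-k}$ by \eqref{eqn:Mdef}--\eqref{eqn:MRdef} — and the $k$ columns left uncovered by $P'$ each contribute a factor $X-\gamma_j$, the index $j$ counting the augmented parts that the cancellation of $P'$ has freed. Because the bookkeeping of who cancels what, and in which order, is identical to \cite{MR}, this part of the argument carries over verbatim once the weight identifications are justified.

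The heart of the argument, and the step I expect to be hardest, is therefore the verification that rule (iii) does what it claims: that for every $s$ the weights assigned to the $a_1,\dots,a_s$-parts of a lower-augmented column sum to $-[-\overline{A}_s]_{a,b;q,p}=-\gamma_s$ (and, by (iv), that the corresponding upper-augmented cells sum to $+\gamma_s$), for all sign patterns of the partial sums $\overline{A}_s=\sum_{i\le s}\overline{sgn}(i)a_i$. I would prove this by induction on $s$: the six cases listed in (iii) are exactly the mutually exclusive possibilities for the relative position of $0$, $\overline{A}_s$ and $\overline{A}_{s+1}$, and in each case the prescribed entries form a run $W_{a,b;q,p}(-\overline{A}_s-1),W_{a,b;q,p}(-\overline{A}_s-2),\dots$ (with appropriate signs and a possible jump across $0$) whose sum equals $[-\overline{A}_s]_{a,b;q,p}-[-\overline{A}_{s+1}]_{a,b;q,p}$ by repeated use of \eqref{eqn:zrec}, which is itself a consequence of the theta addition formula \eqref{eqn:theta_addition}. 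Thus the cumulative total advances from $-\gamma_s$ to $-\gamma_{s+1}$, as required. Checking this telescoping uniformly across all six cases, and checking that the base-shift rule \eqref{eqn:Wid} lets the freed initial segment of augmented parts be absorbed into the single elliptic number $[z+\overline{A}_j]_{aq^{-2\overline{A}_j},bq^{-\overline{A}_j};q,p}$, is the only place where genuine elliptic-function identities are needed.

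Finally, once \eqref{eqn:reduced} is established, the count is complete, and I would undo the reduction by multiplying through by $\prod_i W_{a,b;q,p}(-sgn(i)b_i)^{-1}$ and reinstating the shifted factors via \eqref{eqn:keyfactor}, thereby recovering \eqref{eqn:elptMR} in its stated form.
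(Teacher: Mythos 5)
Your proposal is correct and follows essentially the same route as the paper: a two-way count of the total weight of nonattacking $n$-rook placements on the extended board $\B_z^\A$ (column-by-column for the left side, splitting rooks above and below the high bar for the right side) to establish the ``flat'' identity \eqref{eqn:elptMRpf}, followed by the substitution $[z]_{a,b;q,p}-[-y]_{a,b;q,p}=W_{a,b;q,p}(-y)[z+y]_{aq^{-2y},bq^{-y};q,p}$ from \eqref{eqn:zrecgen} and absorption of the $W$-prefactors into the normalization \eqref{eqn:Rdef}. The only difference is presentational — you peel off the normalization first and then prove the reduced identity, whereas the paper proves it first and normalizes afterwards — and your identification of the telescoping in rule (iii) as the step requiring the theta addition formula matches the paper's construction exactly.
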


\begin{proof}
We first set the cancellation scheme of the rook placements in the
extended augmented board $\B_z ^\A$. We consider placements of $n$ rooks
in $\B_z ^\A$ where there is exactly one rook in each column. If a rook
is placed above the high bar in the $j$-th column, then it cancels all
the cells in columns $j+1, j+2, \dots, n$, in both the upper and lower
augmented parts, which belong to the $a_i$-th part of the highest subscript
in that column which are not cancelled by a rook to the left of column $j$.
A rook which is placed below the high bar does not cancel anything.
We say that a rook placement is \emph{nonattacking} if no rook lies in
a cell which is cancelled by a rook to its left. We denote the set of all
nonattacking rook placements of $n$ rooks
in $\B_z ^\A$ by $\N _n ^\A (\B_z ^\A )$.  

We set, for $Q\in \N_n ^\A (\B _z^\A)$ having rooks in cells $c_1,\dots, c_n$, 
\begin{equation}
M_{a,b;q,p}(\B_z^\A, sgn, \overline{sgn};Q)=\prod_{i=1}^n
M_{a,b;q,p}(\B_z ^\A, sgn, \overline{sgn};c_i).
\end{equation}
By computing the sum
$$WT(a,b;q,p)=\sum_{Q\in \N_n ^\A (\B _z ^\A)}
M_{a,b;q,p}(\B_z^\A, sgn, \overline{sgn};Q)$$
in two different ways, we first prove the following product formula.
\begin{align}\label{eqn:elptMRpf}
&\!\left(\prod_{i:sgn(i)=1} [z]_{a,b;q,p}-[-b_i]_{a,b;q,p}\right)
\left(\prod_{i:sgn(i)=-1}[z]_{a,b;q,p}-[b_i]_{a,b;q,p}\right)\notag\\
& =\sum_{k=0}^n M\!R_{n-k}^\A (a,b;q,p; \B^\A, sgn, \overline{sgn}) 
 \prod_{j=1}^k ([z]_{a,b;q,p}-[-\overline{A}_j]_{a,b;q,p}).
\end{align}
We place $n$ rooks column-wise, starting from the leftmost column. 
Note that the weights coming from the cases of placing the first rook 
in the upper augmented part corresponding to $a_1$-part will be cancelled
with the weights coming from placing the rook in the corresponding cells
in the lower augmented part.
So, starting from the bottom cell in the $z$-part, the possible weights are 
$$ 1, W_{a,b;q,p}(1),\dots, W_{a,b;q,p}(z-1), -1, -W_{a,b;q,p}(1), -W_{a,b;q,p}(2),
\dots, -W_{a,b;q,p}(b_1-1)$$
if $sgn(1)=-1$, and 
$$ 1, W_{a,b;q,p}(1),\dots, W_{a,b;q,p}(z-1), W_{a,b;q,p}(-1), W_{a,b;q,p}(-2),
\dots, W_{a,b;q,p}(-b_1 )$$
if $sgn(1)=1$. Hence the weight sum coming from the possible rook placements
in the first column is 
$[z]_{a,b;q,p}-[b_1]_{a,b;q,p}$ if $sgn(1)=-1$, and $[z]_{a,b;q,p}-[-b_1]_{a,b;q,p}$
if $sgn(1)=1$. 
In general, in each column $i$, the weights of the upper and lower
augmented parts cancel each other, and 
by considering the weights coming from the $z$-part and the base part, 
we get $[z]_{a,b;q,p}-[b_i]_{a,b;q,p}$ if $sgn(i)=-1$, and
$[z]_{a,b;q,p}-[-b_i]_{a,b;q,p}$ if $sgn(i)=1$.
By multiplying the factors coming from the columns $i=1,\dots, n$,
we get the left-hand side of $($\ref{eqn:elptMRpf}$)$.

On the other hand, we consider placing $n-k$ rooks above the high bar
and $k$ rooks below it. Fix a rook placement $P\in \N_{n-k}^\A(\B^\A)$
and extend it to $\N_n ^\A (\B_z ^\A)$ by placing $k$ more rooks below
the high bar, and compute the weight sum coming from placing $k$ more rooks. 
Note that there are $k$ columns containing no rooks. 
Let us say the $l$-th column is the first empty column. Then the lower
augmented part consists of $a_1+\cdots +a_l$ cells, 
but the $l-1$ rooks to the left of the $l$-th column
cancel the cells in $a_l, a_{l-1}, \dots, a_2$ parts, and so it has only
$a_1$ part uncancelled. 
In general, the $i$-th empty column from the left would have $a_1+\cdots +a_i$ 
uncancelled cells. Note that due to the way of assigning the weights
to the cells in the lower augmented part, the sum of the weights in the
cells $a_1,\dots, a_i$ equals to $-[-\overline{A}_i]_{a, b;q,p}$. 
Thus the sum of the weights placing a rook in the $i$-th available column
becomes $[z]_{a,b;q,p}-[-\overline{A}_i]_{a, b;q,p}$.  
Recall that placing a rook below the high bar does not cancel any cells.
Hence,
\begin{align*}
WT(a,b;q,p)
&= \sum_{k=0}^n \sum_{P\in \N_{n-k}^\A (\B^\A)}M_{a,b;q,p}(\B^\A , sgn,\overline{sgn};P)
\prod_{1\le j\le k}([z]_{a,b;q,p}-[-\overline{A}_j]_{a, b;q,p})\\
 &=\sum_{k=0}^n M\!R_{n-k} ^\A (a,b;q,p; \B^\A, sgn, \overline{sgn})
\prod_{1\le j\le k}([z]_{a,b;q,p}-[-\overline{A}_j]_{a, b;q,p})
\end{align*} 
which gives the right hand side of \eqref{eqn:elptMRpf}.
Recall that we have the following identity in \eqref{eqn:zrecgen}
$$[z]_{a,b;q,p}=[y]_{a,b;q,p}+W_{a,b;q,p}(y)[z-y]_{aq^{2y},bq^y;q,p}.$$
Using this identity, the factors in the left hand side of \eqref{eqn:elptMRpf}
can be rewritten as 
$$[z]_{a,b;q,p}-[b_i]_{a,b;q,p}=W_{a,b;q,p}(b_i)[z-b_i]_{a q^{2 b_i},b q^{b_i};q,p}$$ 
in the case when $sgn(i)=-1$ and 
$$[z]_{a,b;q,p}-[-b_i]_{a,b;q,p}=W_{a,b;q,p}(-b_i)[z+b_i]_{a q^{-2 b_i},b q^{-b_i};q,p}$$ 
when $sgn(i)=1$. These factors can be rewritten uniformly as 
$$[z]_{a,b;q,p}-[-sgn(i)b_i]_{a,b;q,p}=
W_{a,b;q,p}(-sgn(i)b_i)[z+sgn(i)b_i]_{a q^{-2 sgn(i)b_i},b q^{-sgn(i)b_i};q,p}.$$
Similarly, we rewrite the factors in the right hand side of
\eqref{eqn:elptMRpf} by using the identity
$$[z]_{a,b;q,p}-[-\overline{A}_j]_{a,b;q,p}=W_{a,b;q,p}(-\overline{A}_j)
[z+\overline{A}_j]_{a q^{-2\overline{A}_j},b q^{-\overline{A}_j};q,p}.$$ 
Then replacing $M\!R_{n-k} ^\A (a,b;q,p; \B^\A, sgn, \overline{sgn})$ by
$R_{n-k}^\A (a,b;q,p; \B^\A, sgn, \overline{sgn})$ which was
defined in \eqref{eqn:Rdef} takes care of the extra factors and in the end
we obtain \eqref{eqn:elptMR}.
\end{proof}

\begin{remark}
In previous work \cite{SY}, we have defined elliptic extensions of
$q$-rook numbers and $q$-file numbers, $r_{k}(a,b;q,p;B)$ and
$f_{k}(a,b;q,p;B)$ respectively, 
and obtained the following product formulas: for any 
Ferrers board $B=B(b_1,\dots, b_n)$,
\begin{equation}\label{eqn:elptrook}
\prod_{i=1}^n [z+b_i -i+1]_{aq^{2(i-1-b_i)},bq^{i-1-b_i};q,p}=
\sum_{k=0}^n r_{n-k}(a,b;q,p;B)
\prod_{j=1}^k[z-j+1]_{aq^{2(j-1)},bq^{j-1};q,p},
\end{equation}
and for any skyline board $B=B(c_1,\dots, c_n)$,
\begin{equation}\label{eqn:elptfile}
\prod_{i=1}^n [z+c_i]_{aq^{-2c_i},bq^{-c_i};q,p}=
\sum_{k=0}^n f_{n-k}(a,b;q,p;B)([z]_{a, b;q,p})^k.
\end{equation}
If we set $sgn(i)=1$ and $\overline{sgn}(i)=-1$ for all $i=1,\dots, n$, 
$\A =(0,1,1,\dots, 1)$ and $\B =(b_1, b_2-1, b_3-2,\dots, b_n-n+1)$,
then the product formula in Theorem \ref{thm:elptMR} becomes
\eqref{eqn:elptrook}.

On the other hand, if we set $sgn(i)=1$ for all $i=1,\dots, n$,
$\A =(0,0, \dots, 0)$
(hence we do not need to define $\overline{sgn}$), $\B =(c_1,c_2,\dots, c_n)$,
then we obtain \eqref{eqn:elptfile} as a result of Theorem \ref{thm:elptMR}.
\end{remark}

%%%%%%%%%%%%%%%%%%%%%%%%%%%%%%%%%%%%%%%%%%%%%%%%%%%%%%%%%%%%%%%%

\section{Realization of other rook models}

%%%%%%%%%%%%%%%%%%%%%%%%%%%%%%%%%%%%%%%%%%%%%%%%%%%%%%%%%%%%%%%5

\subsection{$\J$-attacking rook model}\label{subsec:jattack}

In \cite{RW}, Remmel and Wachs introduced the \emph{$\J$-attacking rook model}. 
Fix an integer $\J\ge 1$. A Ferrers board $B(b_1,\dots, b_n)$ is called a 
\emph{$\J$-attacking board} if $b_{i+1}\ge b_i +\J -1$, for $b_i\ne 0$,
$1\le i <n$. 
Given a $\J$-attacking board $B(b_1,\dots, b_n)$, a rook $\mathbf r\in B$ 
\emph{$\J$-attacks} a cell $c\in B(b_1,\dots, b_n)$ if $c$ lies in a column
which is strictly to the right of the column containing $\mathbf r$ and
$c$ lies in the first $\J$ rows which are weakly above the row of $\mathbf r$
and which are not $\J$-attacked by any rook which lies in a column that is 
strictly to the left of $\mathbf r$. A placement $P$ of $k$ rooks in $B$
is called \emph{$\J$-nonattacking} if each column contains at most one rook
and each rook does not $\J$-attack other rooks. Given a $\J$-nonattacking
rook placement $P$, a rook $\mathbf r\in P$ cancels the cells in the same
column below it and the cells which are 
$\J$-attacked by $\mathbf r$. 

Given a $\J$-attacking board $B$, let $\mathcal N_k ^{~\J} (B)$
denote the set of all $\J$-nonattacking placements of $k$ rooks in $B$.
For any placement $P\in \mathcal N_k ^{~\J} (B)$,
let $u_B ^\J (P)$ denote the number of cells in $B-P$ which are not cancelled 
by any rook in $P$. If we define the $q$-rook number of $B$ by 
$$r_k ^\J(q;B)= \sum_{P\in\mathcal N_k ^{~\J} (B)} q^{u_B ^\J (P)},$$
then we have the following product formula.

\begin{theorem}\cite{RW}\label{thm:RW}
 Given a $\J$-attacking board $B=B(b_1,\dots, b_n)$, 
 \begin{equation}\label{eqn:RW}
 \prod_{i=1}^n [z+b_i -\J(i-1)]_q =
\sum_{k=0}^n r_{n-k} ^\J (q;B)[z]_q \!\downarrow_{k,\J },
\end{equation}
where $[z]_q\!\downarrow_{0,\J}=1$ and for $k>0$,
$[z]_q \!\downarrow_{k,\J}=[z]_q [z-\J]_q\cdots [z-(k-1)\J]_q$.
\end{theorem}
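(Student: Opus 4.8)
The plan is to recover Theorem~\ref{thm:RW} as the classical limit $p,a,b\to0$ of a specialization of the elliptic identity~\eqref{eqn:elptMR} (equivalently, as a specialization of its $q$-form~\eqref{eqn:MRq}). The guiding idea is to choose the augmentation data so that the augmented board $\B^\A$ is exactly the given $\J$-attacking board $B=B(b_1,\dots,b_n)$, with each nonempty $a_s$-part of an augmented column being a horizontal band of $\J$ cells that plays the role of the $\J$ rows $\J$-attacked by a rook.

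First I would fix $sgn(i)=1$ and $\overline{sgn}(i)=-1$ for all $i$, take $\A=(0,\J,\J,\dots,\J)$, and let the base board $\B$ have column heights $b_i-\J(i-1)$. Then $A_i=\J(i-1)$, so column $i$ of $\B^\A$ has total height $(b_i-\J(i-1))+\J(i-1)=b_i$ and indeed $\B^\A=B$. With these choices $\overline{A}_j=\sum_{s\le j}\overline{sgn}(s)a_s=-(j-1)\J$, so the right-hand factors of~\eqref{eqn:elptMR} are $[z+\overline{A}_j]_{aq^{-2\overline{A}_j},bq^{-\overline{A}_j};q,p}$, which in the limit become $[z-(j-1)\J]_q$ and multiply to $[z]_q\!\downarrow_{k,\J}$; the left-hand factors become $\prod_{i=1}^n[z+b_i-\J(i-1)]_q$. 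Both product sides of~\eqref{eqn:RW} are thus reproduced, and the remaining content is the identification of the rook numbers.

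The heart of the matter, and the step I expect to be the main obstacle, is to show that the nonattacking placements with the $a_s$-part cancellation on $\B^\A$ coincide with the $\J$-nonattacking placements on $B$ with the Remmel--Wachs cancellation, and that the statistics agree. The key point is that, for $\A=(0,\J,\dots,\J)$, every nonempty $a_s$-part occupies exactly $\J$ consecutive rows, so a rook cancelling the highest uncancelled $a_s$-part in a column to its right removes precisely a block of $\J$ cells; processing the columns from left to right, I would match this block with the $\J$ rows weakly above the rook that it $\J$-attacks. Checking that the two rules delete the same cells in every column, after accounting for the bands already removed by rooks further left, is the delicate inductive bookkeeping. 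Once the placements are matched, the weight comparison is routine in the limit: since $W_{a,b;q,p}(k)\to q^k$, the $z$- and base-part weights (with $sgn\equiv1$) record the uncancelled cells of $B-P$, the normalizing quotient in~\eqref{eqn:Rdef} cancels the $W_{a,b;q,p}(\cdot)$ prefactors coming from the rewriting via~\eqref{eqn:zrecgen}, and $R^\A_{n-k}(a,b;q,p;\B^\A,sgn,\overline{sgn})$ reduces to $\sum_P q^{u_B^\J(P)}=r_{n-k}^\J(q;B)$.

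One technical point remains: for a general $\J$-attacking board the base heights $b_i-\J(i-1)$ may be negative, so the base/augmented split is not literal. I would handle this through the signed-weight conventions already present in the extended board---a formally negative base contributing the reversed weights $W_{a,b;q,p}(-1),W_{a,b;q,p}(-2),\dots$ in the spirit of~\eqref{eqn:-zsum}---or else establish the identity first on the subfamily with $b_i\ge\J(i-1)$, where the construction is genuine. Substituting the identifications into~\eqref{eqn:elptMR} and letting $p,a,b\to0$ then yields~\eqref{eqn:RW}; keeping $p,a,b$ generic produces, by the same specialization, an elliptic extension of the Remmel--Wachs formula.
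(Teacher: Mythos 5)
First, a point of reference: the paper does not prove Theorem~\ref{thm:RW} at all --- it is quoted from Remmel and Wachs \cite{RW} --- and Section~4.1 runs the specialization only in the forward direction, plugging $\A_{\J,n}=(0,\J,\dots,\J)$, $\B_{\J,n}=(b_1,b_2-\J,\dots,b_n-\J(n-1))$, $sgn\equiv 1$, $\overline{sgn}\equiv -1$ into Theorem~\ref{thm:elptMR} to \emph{produce} the elliptic extension \eqref{eqn:elptRW}, not to re-derive \eqref{eqn:RW}. Your parameter choices, the computation $\overline{A}_j=-\J(j-1)$, and the resulting identification of both product sides coincide exactly with the paper's, so the strategy is the intended one (it is how \cite{MR} recover the Remmel--Wachs model).

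That said, the two issues you flag are genuine gaps, not technicalities, and neither is closed by your sketch. (1) What the limit of \eqref{eqn:elptMR} (or the specialization of \eqref{eqn:MRq}) delivers is a product formula whose coefficients are the renormalized sums $R^{\A}_{n-k}$ of \eqref{eqn:Rdef} over Miceli--Remmel nonattacking placements on the augmented board; Theorem~\ref{thm:RW} asserts the coefficients are $r^{\J}_{n-k}(q;B)=\sum_P q^{u_B^{\J}(P)}$. Equating these requires both the bijection between the $a_s$-part cancellation and the $\J$-attacking cancellation and a cell-by-cell verification that the product of the per-cell weights of the rooks, times the normalizing quotient in \eqref{eqn:Rdef}, equals $q^{u_B^{\J}(P)}$ --- delicate because in the augmented-board model a rook's weight depends only on its own cell, whereas $u_B^{\J}(P)$ counts uncancelled cells of the entire board. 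This is the actual content of the theorem and is only described, not carried out. (2) The positivity problem is real: a $\J$-attacking board only satisfies $b_{i+1}\ge b_i+\J-1$, so for instance $B(1,2,3)$ with $\J=2$ is $\J$-attacking but has $b_3-2\J=-1$, and the base/augmented decomposition does not exist; the paper itself restricts to $b_i\ge\J(i-1)$ in \eqref{eqn:elptRW}, and the ``formally negative base part'' you invoke is not defined anywhere in the model. As written, the argument would establish \eqref{eqn:RW} only for the subfamily $b_i\ge\J(i-1)$, and only modulo the coefficient identification, which one must either work out or import from \cite{MR}.
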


Note that in the case when $\J=1$, if we denote $r_k ^1 (q;B)$ by $r_k (q;B)$,
then we recover the $q$-rook numbers of Garsia and Remmel~\cite{GR}
and the product formula \eqref{eqn:qrook}.
We can obtain an elliptic extension of the product formula \eqref{eqn:RW} 
from Theorem \ref{thm:elptMR}. Let $\A_{\J,n} =(0, \J, \dots, \J)$,
$\B_{\J,n}=(b_1, b_2-\J, \dots, b_n-\J(n-1))$, for $b_i\ge \J(i-1)$,
$sgn(i)=1$ and $\overline{sgn}(i)=-1$, for all $i=1,\dots, n$. 
In this setting, \eqref{eqn:elptMR} becomes 
\begin{align}\label{eqn:elptRW}
&\prod_{i=1}^n [z+b_i-\J(i-1)]_{aq^{2(\J(i-1)-b_i)},bq^{\J(i-1)-b_i};q,p}\notag\\
&=\sum_{k=0}^n R_{n-k} ^\A(a,b;q,p;\B_{\J,n} ^{\A_{\J,n}},sgn, \overline{sgn})
\prod_{i=0}^{k-1}[z-i\J]_{aq^{2i\J},bq^{i\J};q,p}.
\end{align}
If we set $\J=1$ in \eqref{eqn:elptRW}, then we get 
\begin{align}\label{eqn:elptGR}
&\prod_{i=1}^n [z+b_i-i+1]_{aq^{2(i-1-b_i)},bq^{i-1-b_i};q,p}\notag\\
&=\sum_{k=0}^n R_{n-k} ^\A(a,b;q,p;\B_{1,n} ^{\A_{1,n}},sgn, \overline{sgn})
\prod_{i=0}^{k-1}[z-i]_{aq^{2i},bq^{i};q,p}
\end{align}
which can be considered as an elliptic extension of \eqref{eqn:qrook}.
Note that this result is consistent with what we obtained in \cite{SY}.

In \cite{SY} also an elliptic extension of
Haglund and Remmel's~\cite{HR} rook-theoretic model for
perfect matchings was obtained (and even further generalized).
The $\J=2$ case of \eqref{eqn:elptRW}
\begin{align}
&\prod_{i=1}^n [z+b_i-2(i-1)]_{aq^{2(2(i-1)-b_i)},bq^{2(i-1)-b_i};q,p}\notag\\
&=\sum_{k=0}^n R_{n-k} ^\A(a,b;q,p;\B_{2,n} ^{\A_{2,n}},sgn, \overline{sgn})
\prod_{i=0}^{k-1}[z-2i]_{aq^{4i},bq^{2i};q,p}
\end{align}
gives the same product formula as the one
considered in \cite[Sec.~6]{SY}.

\begin{remark}
In \cite{BR0}, Briggs and Remmel defined the $m$-rook numbers and the
$q$-analogue of them, and proved the following product formula:
let $B=B(b_1,\dots, b_n)$ be a Ferrers board satisfying
$0\le b_1\le \cdots \le b_n\le mn$ 
and for each $1\le i\le n-1$, if $b_i=a_i m + b_i$, $1\le b_i < m$, then
$b_{i+1}\ge (a_i +1)m$.
Then 
\begin{equation}\label{eqn:mrook}
\prod_{i=1}^n [mz+b_i-m(i-1)]_q =
\sum_{k=0}^n r_{n-k} ^m (q;B)[mz]_q\!\downarrow_{k,m}.
\end{equation}
We can obtain an elliptic extension of the product formula \eqref{eqn:mrook} 
from \eqref{eqn:elptRW} by setting $\J =m$ and replacing $z$ by $mz$. 
\end{remark}

Remmel and Wachs use the $\J$-attacking rook model to explain 
the generalized Stirling numbers $s_{n,k}^{\I, \J}(\mathfrak p,q)$ and
$S_{n,k}^{\I, \J}(\mathfrak p,q)$
combinatorially in \cite{RW}. The generalized Stirling numbers of the
second kind 
$S_{n,k}^{\I, \J}(\mathfrak p,q)$ can be defined by the recurrence relation
$$
S_{0,0}^{\I, \J}(\mathfrak p,q)=1, \quad S_{n,k}^{\I, \J}(\mathfrak p,q)=0
\text{ if $k<0$ or $k>n$},
$$
$$
S_{n+1,k}^{\I, \J}(\mathfrak p,q)=S_{n,k-1}^{\I, \J}(\mathfrak p,q)+
[\I+k\J]_{\mathfrak p,q}S_{n,k}^{\I, \J}(\mathfrak p,q),
$$
and also they satisfy 
\begin{equation}\label{eqn:str2}
[z]_{\mathfrak p,q} ^n =\sum_{k=0}^n S_{n,k}^{\I, \J}(\mathfrak p,q)
([z]_{\mathfrak p,q}-[\I]_{\mathfrak p,q}) ([z]_{\mathfrak p,q}-[\I+\J]_{\mathfrak p,q})
\cdots ([z]_{\mathfrak p,q}-[\I +(k-1)\J]_{\mathfrak p,q}),
\end{equation}
where $[n]_{\mathfrak p, q}=\frac{\mathfrak p ^n - q^n}{\mathfrak p -q}$.
Note that we used a different font ``$\mathfrak p$''
to distinguish from the nome $p$ in elliptic functions. 

If we set $\A_{\I,\J,n} =(\I, \J, \dots, \J)$, $\B_0 =(0, \dots, 0)$, 
$\overline{sgn}(i)=-1$, for all $i=1,\dots, n$, \eqref{eqn:elptMRpf} becomes 
\begin{equation}\label{eqn:elptstr2}
[z]_{a,b;q,p} ^n =\sum_{k=0}^n
M\!R_{n-k}^\A(a,b;q,p;\B_0 ^{\A_{\I, \J,n}},sgn,\overline{sgn}
\prod_{s=1}^k\left([z]_{a,b;q,p}-[\I +(s-1)\J]_{a,b;q,p} \right) 
\end{equation}
which can be considered as an elliptic extension of \eqref{eqn:str2}.
Thus we define 
$$
S_{n,k}^{\I, \J}(a,b;q,p):= 
M\!R_{n-k}^\A(a,b;q,p;\B_0 ^{\A_{\I, \J,n}},sgn,\overline{sgn})
$$
as an elliptic extension of $S_{n,k}^{\I, \J}(\mathfrak p,q)$.
From \eqref{eqn:str2}, we get 
the recurrence relation of $S_{n,k}^{\I, \J}(a,b;q,p)$
$$S_{n+1,k}^{\I, \J}(a,b;q,p)=S_{n,k-1}^{\I, \J}(a,b;q,p)+[\I +k\J]_{a,b;q,p}
S_{n,k}^{\I, \J}(a,b;q,p)$$
which may be used to define $S_{n,k}^{\I, \J}(a,b;q,p)$ uniquely with the
conditions 
$$S_{0,0}^{\I, \J}(a,b;q,p)=1 ~\text{ and }~ S_{n,k}^{\I, \J}(a,b;q,p)=0
\text{ for $k<0$ or $k>n$}.$$

On the other hand, the generalized Stirling numbers of the first kind 
$s_{n,k}^{\I, \J}(\mathfrak p,q)$ are defined by 
$$s_{0,0}^{\I, \J}(\mathfrak p,q)=1, \quad s_{n,k}^{\I, \J}(\mathfrak p,q)=0
\text{ if $k<0$ or $k>n$},$$
$$s_{n+1,k}^{\I, \J}(\mathfrak p,q)=s_{n,k-1}^{\I, \J}(\mathfrak p,q)
-[\I+n\J]_{\mathfrak p,q}s_{n,k}^{\I, \J}(\mathfrak p,q),$$
and they have the generating function 
\begin{equation}\label{eqn:str1}
([z]_{\mathfrak p,q}-[\I]_{\mathfrak p,q})([z]_{\mathfrak p,q}-[\I+\J]_{\mathfrak p,q})
\cdots ([z]_{\mathfrak p,q}-[\I+(n-1)\J]_{\mathfrak p,q})
=\sum_{k=0}^n s_{n,k}^{\I, \J}(\mathfrak p,q)[z]_{\mathfrak p,q} ^k.
\end{equation}
If we set $\A_0 =(0,0,\dots, 0)$ and
$\B_{\I,\J,n}=(\I, \I+\J, \dots, \I+(n-1)\J)$, $sgn(i)=-1$
for all $i=1,\dots, n$, \eqref{eqn:elptMRpf} becomes 
\begin{align}\label{eqn:elptstr1}
([z]_{a,b;q,p}-[\I]_{a,b;q,p})([z]_{a,b;q,p}-[\I+\J]_{a,b;q,p})\cdots
([z]_{a,b;q,p}-[\I+(n-1)\J]_{a,b;q,p})\notag&\\
=\sum_{k=0}^n  M\!R_{n-k}^\A(a,b;q,p;\B_{\I,\J,n} ^{\A_{0}},sgn,\overline{sgn})
[z]_{a,b;q,p} ^k&.
\end{align}
We let $s_{n,k}^{\I, \J}(a,b;q,p):=
M\!R_{n-k}^\A(a,b;q,p;\B_{\I,\J,n} ^{\A_{0}},sgn,\overline{sgn})$ 
which defines an elliptic extension of $s_{n,k}^{\I, \J}(\mathfrak p,q)$.
Then \eqref{eqn:elptstr1} 
gives the recurrence relation 
$$s_{n+1,k}^{\I, \J}(a,b;q,p)=s_{n,k-1}^{\I, \J}(a,b;q,p)-
[\I+n\J]_{a,b;q,p}s_{n,k}^{\I, \J}(a,b;q,p)$$
which can be used to define $s_{n,k}^{\I, \J}(a,b;q,p)$ uniquely with the
conditions 
$s_{0,0}^{\I, \J}(a,b;q,p)=1$ and $s_{n,k}^{\I, \J}(a,b;q,p)=0$ if $k<0$ or $k>n$. 

As the matrices $||s_{n,k}^{\I, \J}(\mathfrak p,q)||$ and
$||S_{n,k}^{\I, \J}(\mathfrak p,q)||$
are inverses of each other, the elliptic extensions
$||s_{n,k}^{\I, \J}(a,b;q,p)||$ and 
$||S_{n,k}^{\I, \J}(a,b;q,p)||$ share the same property.
The idea used to prove this property 
in \cite{RW} works in this case similarly. 

\begin{proposition}
For all $0\le r \le n$, 
\begin{equation}
\sum_{k=r}^n  S_{n,k}^{\I, \J}(a,b;q,p)s_{k,r}^{\I, \J}(a,b;q,p) =\chi(r=n),
\end{equation}
where $\chi(A)=1$ if the statement $A$ is true and $0$ otherwise.
\end{proposition}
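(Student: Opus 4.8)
The plan is to prove the orthogonality relation as a purely formal consequence of the two recurrence relations that define $S_{n,k}^{\I,\J}(a,b;q,p)$ and $s_{n,k}^{\I,\J}(a,b;q,p)$, mirroring the argument of Remmel and Wachs. The key observation is that both triangular arrays are characterized entirely by their recurrences together with the boundary conditions ($S_{0,0}=s_{0,0}=1$ and vanishing outside $0\le k\le n$); the elliptic parameters $a,b,q,p$ enter only through the single quantity $[\I+k\J]_{a,b;q,p}$ appearing in each recurrence, so the proof never needs to touch the elliptic structure directly. I would set $T_{n,r}:=\sum_{k=r}^n S_{n,k}^{\I,\J}(a,b;q,p)\,s_{k,r}^{\I,\J}(a,b;q,p)$ and show $T_{n,r}=\chi(r=n)$ by induction on $n$.

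First I would dispose of the base case $n=0$, where the sum collapses to $S_{0,0}s_{0,0}=1=\chi(r=0)$, and note the trivial cases $r>n$ (empty sum, equal to $0$) and $r=n$ separately. For the induction step, I would compute $T_{n+1,r}$ by substituting the recurrence
$$
S_{n+1,k}^{\I,\J}(a,b;q,p)=S_{n,k-1}^{\I,\J}(a,b;q,p)+[\I+k\J]_{a,b;q,p}\,S_{n,k}^{\I,\J}(a,b;q,p)
$$
into the defining sum, splitting $T_{n+1,r}$ into two pieces. The first piece, after re-indexing $k\mapsto k+1$, becomes $\sum_k S_{n,k}^{\I,\J}s_{k+1,r}^{\I,\J}$; the second is $\sum_k [\I+k\J]_{a,b;q,p}\,S_{n,k}^{\I,\J}s_{k,r}^{\I,\J}$. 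The trick is now to invoke the \emph{first-kind} recurrence in reverse, writing
$$
s_{k+1,r}^{\I,\J}(a,b;q,p)=s_{k,r-1}^{\I,\J}(a,b;q,p)-[\I+k\J]_{a,b;q,p}\,s_{k,r}^{\I,\J}(a,b;q,p),
$$
so that the term $-[\I+k\J]_{a,b;q,p}\,S_{n,k}^{\I,\J}s_{k,r}^{\I,\J}$ from the first piece cancels exactly against the second piece.

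After this cancellation I expect to be left with $T_{n+1,r}=\sum_k S_{n,k}^{\I,\J}(a,b;q,p)\,s_{k,r-1}^{\I,\J}(a,b;q,p)=T_{n,r-1}$, which by the induction hypothesis equals $\chi(r-1=n)=\chi(r=n+1)$, completing the step. The main obstacle is purely bookkeeping: I must be careful that the shifts in summation index do not introduce spurious boundary terms, i.e.\ that the terms at $k=r$ and $k=n+1$ (and the implicit $k=r-1$ term created by reindexing) vanish by the boundary conditions $s_{k,r}=0$ for $r>k$ and $S_{n,k}=0$ for $k>n$. Once the index ranges are reconciled so that both recurrences may be applied on the overlapping range, the cancellation is exact and no elliptic identity beyond the well-definedness of $[\I+k\J]_{a,b;q,p}$ is required. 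This is precisely the standard inverse-matrix argument for a pair of Stirling-type triangles, now carried out verbatim in the elliptic setting, which is why the proof transfers without modification.
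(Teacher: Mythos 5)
Your proof is correct, but it follows a genuinely different route from the paper. You treat the orthogonality as a purely formal consequence of the two recurrences
$S_{n+1,k}^{\I,\J}=S_{n,k-1}^{\I,\J}+[\I+k\J]_{a,b;q,p}S_{n,k}^{\I,\J}$ and
$s_{k+1,r}^{\I,\J}=s_{k,r-1}^{\I,\J}-[\I+k\J]_{a,b;q,p}s_{k,r}^{\I,\J}$
together with the boundary conditions; the key point that makes the cancellation work is that the multiplier in the second-kind recurrence depends on the \emph{column} index while the multiplier in the first-kind recurrence depends on the \emph{row} index, so both become $[\I+k\J]_{a,b;q,p}$ in the summation variable $k$ and cancel exactly, leaving $T_{n+1,r}=T_{n,r-1}$. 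Your handling of the boundary terms ($s_{r-1,r}=0$ at the low end, $S_{n,n+1}=0$ at the high end) is exactly what is needed, and since the paper establishes these recurrences for the elliptic Stirling numbers before stating the proposition, your argument is legitimate in context. The paper instead argues combinatorially, directly from the definitions of $S_{n,k}^{\I,\J}$ and $s_{k,r}^{\I,\J}$ as weighted sums over nonattacking rook placements on the augmented boards $\B_0^{\A_{\I,\J,n}}$ and $\B_{\I,\J,k}^{\A_0}$: pairs $(P,Q)$ are partitioned into three classes according to whether the last columns are occupied, a sign-reversing weight-preserving involution matches Class I with Class II (moving the rook from the last column of $P$ to a newly attached last column of $Q$, where the cell weights differ exactly by a sign), and a weight-preserving bijection sends Class III to the corresponding set for $(n-1,r-1)$, after which induction finishes the proof. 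Your approach buys brevity and generality---it works verbatim for any pair of triangular arrays defined by such mutually inverse recurrences, with no elliptic input beyond well-definedness of $[\I+k\J]_{a,b;q,p}$---while the paper's approach stays inside the rook-theoretic framework and exhibits the cancellation bijectively, which is more in the spirit of the combinatorial model being developed and does not require first passing through the recurrences.
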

\begin{proof}
Recall the definitions 
\begin{align*}
S_{n,k}^{\I, \J}(a,b;q,p) &= M\!R_{n-k}^\A (a,b;q,p;\B_0 ^{\A_{\I,\J,n}},sgn,
\overline{sgn})\\
&= \sum_{P\in \mathcal N_{n-k} ^\A (\B _0 ^{\A_{\I, \J,n}})}
M_{a,b;q,p}(\B_0 ^{\A_{\I, \J,n}},sgn,\overline{sgn};P),
\end{align*}
and 
\begin{align*}
s_{k,r}^{\I, \J}(a,b;q,p) &= M\!R_{k-r}^\A (a,b;q,p;\B_{\I, \J,k} ^{\A_{0}},sgn,
\overline{sgn})\\
&= \sum_{P\in \mathcal N_{k-r} ^\A (\B _{\I, \J,k} ^{\A_{0}})}
M_{a,b;q,p}(\B_{\I, \J,k} ^{\A_{0}},sgn,\overline{sgn};P),
\end{align*}
where $M_{a,b;q,p}(\B ^{\A},sgn,\overline{sgn};P)$ is defined in \eqref{eqn:Mdef},
hence we want to show that 
\begin{align*}
&{} \sum_{k=r}^n  S_{n,k}^{\I, \J}(a,b;q,p)s_{k,r}^{\I, \J}(a,b;q,p)\\
&= \sum_{k=r}^n \left(\sum_{(P,Q)\in \mathcal N_{n-k} ^\A (\B _0 ^{\A_{\I, \J,n}})
\times \mathcal N_{k-r} ^\A (\B _{\I, \J,k} ^{\A_{0}})} 
M_{a,b;q,p}(\B_0 ^{\A_{\I, \J,n}},sgn,\overline{sgn};P)
M_{a,b;q,p}(\B_{\I, \J,k} ^{\A_{0}},sgn,\overline{sgn};Q)
\right)\\
&= \begin{cases}
    1 & \text{ if } r=n,\\
    0 & \text{ otherwise .}
   \end{cases}
\end{align*}

For $r=n$, 
\begin{align*}
 &{} S_{n,n}^{\I, \J}(a,b;q,p) s_{n,n}^{\I, \J}(a,b;q,p)\\
 &= 
 M\!R_{0}^\A (a,b;q,p;\B_0 ^{\A_{\I,\J,n}},sgn, \overline{sgn})
M\!R_{0}^\A (a,b;q,p;\B_{\I, \J,n} ^{\A_{0}},sgn, \overline{sgn})=1.
\end{align*}
Now suppose that $r<n$. Consider the elements 
$$(P,Q)\in \bigcup_{k=r}^n \mathcal N_{n-k} ^\A (\B _0 ^{\A_{\I, \J,n}})\times 
\mathcal N_{k-r} ^\A (\B _{\I, \J,k} ^{\A_{0}})$$
and partition them into three classes:
\begin{itemize}
 \item[(i)] $(P,Q)\in \text{Class I }$ if $P$ has a rook in the last column
of $\B_0 ^{\A_{\I,\J,n}}$,
 \item[(ii)] $(P,Q)\in \text{Class II }$ if $P$ has no rook in the
last column of $\B_0 ^{\A_{\I,\J,n}}$,
 but there is a rook of $Q$ in the last column of $\B_{\I, \J,k} ^{\A_{0}}$,
 \item[(iii)] $(P,Q)\in \text{Class III }$ if $P$ has no rook in the last
column of $\B_0 ^{\A_{\I,\J,n}}$
 and $Q$ has no rook in the last column of $\B_{\I, \J,k} ^{\A_{0}}$.
\end{itemize}
We make a correspondence between the elements in Class I and Class II such
that the sum of the weights becomes zero. 
More precisely, given $(P,Q)\in  \mathcal N_{n-k} ^\A (\B _0 ^{\A_{\I, \J,n}})
\times  \mathcal N_{k-r} ^\A (\B _{\I, \J,k} ^{\A_{0}})$ of Class I, we find
$(P', Q')\in  \mathcal N_{n-k-1} ^\A (\B _0 ^{\A_{\I, \J,n}})\times 
\mathcal N_{k-r+1} ^\A (\B _{\I, \J,k+1} ^{\A_{0}})$
of Class II such that 
\begin{align*}
&{}M_{a,b;q,p}(\B_0 ^{\A_{\I, \J,n}},sgn,\overline{sgn};P)
M_{a,b;q,p}(\B_{\I, \J,k} ^{\A_{0}},sgn,\overline{sgn};Q)\\
&+M_{a,b;q,p}(\B_0 ^{\A_{\I, \J,n}},sgn,\overline{sgn};P')
M_{a,b;q,p}(\B_{\I, \J,k+1} ^{\A_{0}},sgn,\overline{sgn};Q')=0.
\end{align*}
Since $(P,Q)$ is in Class I, $P$ has a rook in the last column of
$\B_0 ^{\A_{\I,\J,n}}$, and the $n-k-1$ rooks to the left of the last column
cancel $(n-k-1)\J$ cells in the last column corresponding 
to the $a_n$, $a_{n-1},\dots, a_{k+2}$-parts, and there are
$\I +(n-1)\J -(n-k-1)\J=\I+k\J$ uncancelled cells 
which have assigned weights $1, W_{a,b;q,p}(1),\dots, W_{a,b;q,p}(\I+k\J-1)$
from bottom to top. Then define $P'$ to be the placement $P$ after
removing the rook in the last column of $\B_0 ^{\A_{\I,\J,n}}$
and $Q'$ to be the result of attaching an extra column of height $\I+k\J$
to the right of the placement $Q$ such that this extra column contains
a rook in the $t$-th cell from the bottom, if $P$ had the last rook
in the $t$-th cell from the bottom. Note that by the way of assigning 
weights to the cells, this last column in $Q'$ has weights
$-1, -W_{a,b;q,p}(1),\dots,- W_{a,b;q,p}(\I+k\J-1)$ from the bottom. Hence,
the correspondence $(P,Q)\to (P',Q')$ would preserve the product of weights 
but changes the signs. Reversing this correspondence can be easily described.
If $(P',Q')$ is in Class II, then $Q$ is the result of removing the
last column of $Q'$ and $P$ is the result of putting a rook in the
last column of $\B_0 ^{\A_{\I,\J,n}}$ in the $t$-th row from the bottom,
if $Q'$ had a rook in the $t$-th row from the bottom. See
Figure~\ref{fig:map1} for an example of this map.
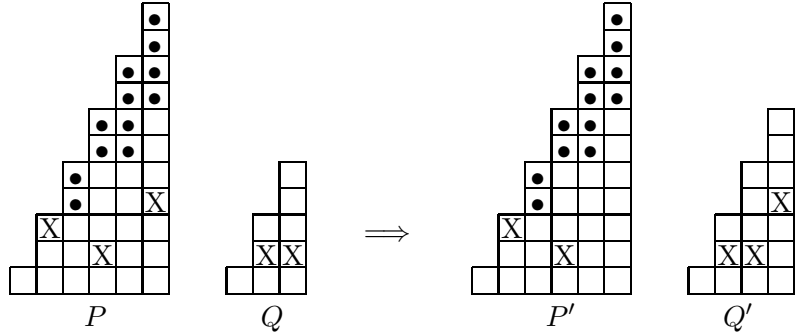
\begin{figure}[ht]
$$\begin{picture}(60,120)(0,0)
\multiput(0,10)(0,10){2}{\line(1,0){60}}
\multiput(10,30)(0,10){2}{\line(1,0){50}}
\multiput(20,50)(0,10){2}{\line(1,0){40}}
\multiput(30,70)(0,10){2}{\line(1,0){30}}
\multiput(40,90)(0,10){2}{\line(1,0){20}}
\multiput(50,110)(0,10){2}{\line(1,0){10}}
\multiput(0,10)(10,0){1}{\line(0,1){10}}
\multiput(10,10)(10,0){1}{\line(0,1){30}}
\multiput(20,10)(10,0){1}{\line(0,1){50}}
\multiput(30,10)(10,0){1}{\line(0,1){70}}
\multiput(40,10)(10,0){1}{\line(0,1){90}}
\multiput(50,10)(10,0){2}{\line(0,1){110}}
\put(28, -2){$P$}
\put(11,31){X}
\put(31,21){X}
\put(51, 41){X}
\put(22,41){$\bullet$}
\put(22,51){$\bullet$}
\put(32,61){$\bullet$}
\put(32,71){$\bullet$}
\put(42,61){$\bullet$}
\put(42,71){$\bullet$}
\put(42,81){$\bullet$}
\put(42,91){$\bullet$}
\put(52,111){$\bullet$}
\put(52,101){$\bullet$}
\put(52,81){$\bullet$}
\put(52,91){$\bullet$}
\end{picture}
\qquad 
\begin{picture}(30,50)(0,0)
\multiput(0,10)(0,10){2}{\line(1,0){30}}
\multiput(10,30)(0,10){2}{\line(1,0){20}}
\multiput(20,50)(0,10){2}{\line(1,0){10}}
\multiput(0,10)(10,0){1}{\line(0,1){10}}
\multiput(10,10)(10,0){1}{\line(0,1){30}}
\multiput(20,10)(10,0){2}{\line(0,1){50}}
\put(13, -2){$Q$}
\put(11,21){X}
\put(21,21){X}
\end{picture}
\qquad 
\begin{picture}(30,30)(0,0)
\put(0,30){$\Longrightarrow$} 
\end{picture}
\quad 
\begin{picture}(60,120)(0,0)
\multiput(0,10)(0,10){2}{\line(1,0){60}}
\multiput(10,30)(0,10){2}{\line(1,0){50}}
\multiput(20,50)(0,10){2}{\line(1,0){40}}
\multiput(30,70)(0,10){2}{\line(1,0){30}}
\multiput(40,90)(0,10){2}{\line(1,0){20}}
\multiput(50,110)(0,10){2}{\line(1,0){10}}
\multiput(0,10)(10,0){1}{\line(0,1){10}}
\multiput(10,10)(10,0){1}{\line(0,1){30}}
\multiput(20,10)(10,0){1}{\line(0,1){50}}
\multiput(30,10)(10,0){1}{\line(0,1){70}}
\multiput(40,10)(10,0){1}{\line(0,1){90}}
\multiput(50,10)(10,0){2}{\line(0,1){110}}
\put(28, -2){$P'$}
\put(11,31){X}
\put(31,21){X}
\put(22,41){$\bullet$}
\put(22,51){$\bullet$}
\put(32,61){$\bullet$}
\put(32,71){$\bullet$}
\put(42,61){$\bullet$}
\put(42,71){$\bullet$}
\put(42,81){$\bullet$}
\put(42,91){$\bullet$}
\put(52,111){$\bullet$}
\put(52,101){$\bullet$}
\put(52,81){$\bullet$}
\put(52,91){$\bullet$}
\end{picture}
\qquad 
\begin{picture}(30,50)(0,0)
\multiput(0,10)(0,10){2}{\line(1,0){40}}
\multiput(10,30)(0,10){2}{\line(1,0){30}}
\multiput(20,50)(0,10){2}{\line(1,0){20}}
\multiput(30,70)(0,10){2}{\line(1,0){10}}
\multiput(0,10)(10,0){1}{\line(0,1){10}}
\multiput(10,10)(10,0){1}{\line(0,1){30}}
\multiput(20,10)(10,0){1}{\line(0,1){50}}
\multiput(30,10)(10,0){2}{\line(0,1){70}}
\put(13, -2){$Q'$}
\put(11,21){X}
\put(21,21){X}
\put(31, 41){X}
\end{picture}
$$
\caption{An example of correspondence from Class I to Class II}\label{fig:map1}
\end{figure} 
This bijection implies that we only need to consider the elements in Class III. 

Note that if $r=0$, then there are no elements in Class III since every element 
$(P,Q)\in \mathcal N_{n-k} ^\A (\B _0 ^{\A_{\I, \J,n}})\times 
\mathcal N_{k-0} ^\A (\B _{\I, \J,k} ^{\A_{0}})$
has a rook of $Q$ in the last column of $\B _{\I, \J,k} ^{\A_{0}}$. Hence, 
in the case when $r=0$, the correspondence between Class I and Class II proves 
$$\sum_{k=0}^n  S_{n,k}^{\I, \J}(a,b;q,p)s_{k,0}^{\I, \J}(a,b;q,p)=0.$$

For $r\ge 1$, there is a weight-preserving bijection between Class III and
the set
\hfill\linebreak
$\bigcup_{k=r-1}^{n-1}\mathcal N_{n-1-k} ^\A (\B _0 ^{\A_{\I, \J,n-1}})\times
\mathcal N_{k-(r-1)} ^\A (\B _{\I, \J,k} ^{\A_{0}})$,
simply by removing (and adding, in the other direction of the correspondence)
the empty last columns. 
See Figure~\ref{fig:map2} for an example.
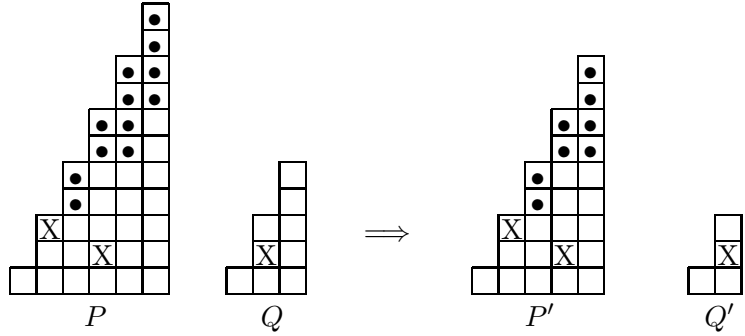
\begin{figure}[ht]
$$\begin{picture}(60,120)(0,0)
\multiput(0,10)(0,10){2}{\line(1,0){60}}
\multiput(10,30)(0,10){2}{\line(1,0){50}}
\multiput(20,50)(0,10){2}{\line(1,0){40}}
\multiput(30,70)(0,10){2}{\line(1,0){30}}
\multiput(40,90)(0,10){2}{\line(1,0){20}}
\multiput(50,110)(0,10){2}{\line(1,0){10}}
\multiput(0,10)(10,0){1}{\line(0,1){10}}
\multiput(10,10)(10,0){1}{\line(0,1){30}}
\multiput(20,10)(10,0){1}{\line(0,1){50}}
\multiput(30,10)(10,0){1}{\line(0,1){70}}
\multiput(40,10)(10,0){1}{\line(0,1){90}}
\multiput(50,10)(10,0){2}{\line(0,1){110}}
\put(28, -2){$P$}
\put(11,31){X}
\put(31,21){X}
%\put(51, 41){X}
\put(22,41){$\bullet$}
\put(22,51){$\bullet$}
\put(32,61){$\bullet$}
\put(32,71){$\bullet$}
\put(42,61){$\bullet$}
\put(42,71){$\bullet$}
\put(42,81){$\bullet$}
\put(42,91){$\bullet$}
\put(52,111){$\bullet$}
\put(52,101){$\bullet$}
\put(52,81){$\bullet$}
\put(52,91){$\bullet$}
\end{picture}
\qquad 
\begin{picture}(30,50)(0,0)
\multiput(0,10)(0,10){2}{\line(1,0){30}}
\multiput(10,30)(0,10){2}{\line(1,0){20}}
\multiput(20,50)(0,10){2}{\line(1,0){10}}
\multiput(0,10)(10,0){1}{\line(0,1){10}}
\multiput(10,10)(10,0){1}{\line(0,1){30}}
\multiput(20,10)(10,0){2}{\line(0,1){50}}
\put(13, -2){$Q$}
\put(11,21){X}
%\put(21,21){X}
\end{picture}
\qquad 
\begin{picture}(30,30)(0,0)
\put(0,30){$\Longrightarrow$} 
\end{picture}
\quad 
\begin{picture}(60,120)(0,0)
\multiput(0,10)(0,10){2}{\line(1,0){50}}
\multiput(10,30)(0,10){2}{\line(1,0){40}}
\multiput(20,50)(0,10){2}{\line(1,0){30}}
\multiput(30,70)(0,10){2}{\line(1,0){20}}
\multiput(40,90)(0,10){2}{\line(1,0){10}}
%\multiput(50,110)(0,10){2}{\line(1,0){10}}
\multiput(0,10)(10,0){1}{\line(0,1){10}}
\multiput(10,10)(10,0){1}{\line(0,1){30}}
\multiput(20,10)(10,0){1}{\line(0,1){50}}
\multiput(30,10)(10,0){1}{\line(0,1){70}}
\multiput(40,10)(10,0){2}{\line(0,1){90}}
%\multiput(50,10)(10,0){2}{\line(0,1){110}}
\put(20, -2){$P'$}
\put(11,31){X}
\put(31,21){X}
\put(22,41){$\bullet$}
\put(22,51){$\bullet$}
\put(32,61){$\bullet$}
\put(32,71){$\bullet$}
\put(42,61){$\bullet$}
\put(42,71){$\bullet$}
\put(42,81){$\bullet$}
\put(42,91){$\bullet$}
%\put(52,111){$\bullet$}
%\put(52,101){$\bullet$}
%\put(52,81){$\bullet$}
%\put(52,91){$\bullet$}
\end{picture}
\qquad 
\begin{picture}(30,50)(0,0)
\multiput(0,10)(0,10){2}{\line(1,0){20}}
\multiput(10,30)(0,10){2}{\line(1,0){10}}
%\multiput(20,50)(0,10){2}{\line(1,0){20}}
%\multiput(30,70)(0,10){2}{\line(1,0){10}}
\multiput(0,10)(10,0){1}{\line(0,1){10}}
\multiput(10,10)(10,0){2}{\line(0,1){30}}
%\multiput(20,10)(10,0){2}{\line(0,1){50}}
%\multiput(30,10)(10,0){2}{\line(0,1){70}}
\put(6, -2){$Q'$}
\put(11,21){X}
%\put(21,21){X}
%\put(31, 41){X}
\end{picture}
$$
\caption{An example of a map from Class III to 
$\bigcup_{k=r-1}^{n-1}\mathcal N_{n-1-k} ^\A (\B _0 ^{\A_{\I, \J,n-1}})\times
\mathcal N_{k-(r-1)} ^\A (\B _{\I, \J,k} ^{\A_{0}})$.}\label{fig:map2}
\end{figure}
Thus the two bijections 
that we constructed above explains 
\begin{align*}
&{}\sum_{k=r}^n  S_{n,k}^{\I, \J}(a,b;q,p)s_{k,r}^{\I, \J}(a,b;q,p)\\
={}
&{}\sum_{k=r-1}^{n-1}  S_{n-1,k}^{\I, \J}(a,b;q,p)s_{k,r-1}^{\I, \J}(a,b;q,p)\\
={}&{}\chi(n-1=r-1)
\end{align*}
where the last equality comes from the induction hypothesis. 
\end{proof}

%%%%%%%%%%%%%%%%%%%%%%%%%%%%%%%%%%%%%%%%%%%%%%%%%%%%%%%%%%%%%%%%%%%%%%%

\subsection{Elliptic extension of the Stirling numbers}

%%%%%%%%%%%%%%%%%%%%%%%%%%%%%%%%%%%%%%%%%%%%%%%%%%%%%%%%%%%%%%%%%%%%%

In the generalized Stirling numbers $S_{n,k}^{\I, \J}(a,b;q,p)$ and
$s_{n,k}^{\I, \J}(a,b;q,p)$ considered in Section \ref{subsec:jattack},
if we set $\I=0$ and $\J=1$, we can consider them as elliptic extensions
of the $q$-Stirling numbers $S_q (n,k)$ and $s_q(n,k)$. 
To be consistent with the notation for $q$-Stirling numbers, let us denote 
$$S_{a,b;q,p}(n,k):= S_{n,k}^{0, 1}(a,b;q,p),\qquad s_{a,b;q,p}(n,k):=
s_{n,k}^{0, 1}(a,b;q,p).$$
Note that if we let $p\to 0$, $a\to 0$ and $b\to 0$ in this order 
(or $p\to 0$, $b\to 0$ and $a\to \infty$ in this order), then the
elliptic extensions converge to the $q$-analogues of the Stirling numbers
$S_q (n,k)$ and $s_q(n,k)$.
In \cite{ML}, de M\'{e}dicis and Leroux introduced and studied
$\mathfrak A$-Stirling numbers which are generalizations of the
Stirling numbers of the second and first kind.
By setting $w_i =[i]_{a,b;q,p}$ in their setting (see \cite{ML} for details)
we can obtain the elliptic extensions $S_{a,b;q,p}(n,k)$ and $s_{a,b;q,p}(n,k)$.
In \cite{ML}, the generalizations of convolution formulae has been studied
and proved by using the $\mathcal A$-tableaux which are the generalizations of 
the $0$-$1$-tableaux. Here, we state the convolution formulae of
$S_{a,b;q,p}(n,k)$ 
and $s_{a,b;q,p}(n,k)$, and prove them by using the augmented rook board. Let 
$c_{a,b;q,p}(n,k)=(-1)^{n-k}s_{a,b;q,p}(n,k)$ denote the unsigned Stirling numbers 
of the first kind.

\begin{proposition}
The elliptic extension of the Stirling numbers of the second kind
$S_{a,b;q,p}(n,k)$
satisfy 
\begin{multline}\label{eqn:conStir2-1}
S_{a,b;q,p}(m+n, k)=\sum_{i=0}^k \sum_{j=k-i}^m \binom{m}{j}([i]_{a,b;q,p}) ^{m-j}
(W_{a,b;q,p}(i))^{i+j-k}\\
\times S_{a,b;q,p}(n,i)S_{a q^{2i}, b q^i;q,p}(j, k-i),
\end{multline}
\begin{multline}\label{eqn:conStir2-2}
S_{a,b;q,p}(n+1,k+l+1)=
\sum_{i=0}^n \sum_{j=0}^{n-l-i}\left\{ 
\binom{n-i}{j}(W_{a,b;q,p}(k+1))^{n-l-i-j}\right.\\
\left.\times [k+1]_{a,b;q,p}^j S_{a,b;q,p}(i,k)S_{a q^{2k+2},b q^{k+1};q,p}(n-i-j,l)
\right\}, 
\end{multline}
and the elliptic extension of the signless Stirling numbers of the
first kind $c_{a,b;q,p}(n,k)$ satisfy
\begin{multline}\label{eqn:conStir1-1}
c_{a,b;q,p}(m+n, k)=\sum_{i=0}^k \sum_{j=k-i}^m \binom{j}{k-i}
([n]_{a,b;q,p})^{j-k+i}(W_{a,b;q,p}(n))^{m-j}\\
\times c_{a,b;q,p}(n,i)c_{a q^{2n}, b q^n;q,p}(m,j),
\end{multline}
\begin{multline}\label{eqn:conStir1-2}
 c_{a,b;q,p}(n+1,k+l+1)=
\sum_{i=0}^n \sum_{j=0}^{n-l-i}\left\{ 
\binom{j+l}{j}(W_{a,b;q,p}(i+1))^{n-l-i-j}\right.\\
\left.\times ([i+1]_{a,b;q,p})^j c_{a,b;q,p}(i,k)c_{a q^{2i+2},b q^{i+1};q,p}(n-i,j+l)
\right\}. 
\end{multline}
\end{proposition}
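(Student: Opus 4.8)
The plan is to prove all four identities by the same device: a combinatorial decomposition of the weighted rook placements computing the left-hand side, obtained by splitting the underlying $(m+n)$-column board into its first $n$ and its last $m$ columns. Recall that, by definition, $S_{a,b;q,p}(n,k)=M\!R_{n-k}^\A(a,b;q,p;\B_0^{\A_{0,1,n}},sgn,\overline{sgn})$ is the sum of the weights $M_{a,b;q,p}$ of \eqref{eqn:Mdef} over the nonattacking placements in $\N_{n-k}^\A(\B_0^{\A_{0,1,n}})$, and that $c_{a,b;q,p}(n,k)=(-1)^{n-k}s_{a,b;q,p}(n,k)$ is, up to the global sign $(-1)^{n-k}$, a weighted count of the (uncancelled, file-type) placements in $\N_{n-k}^\A(\B_{0,1,n}^{\A_0})$; in the latter model every rook weight is a negative of a positive quantity, so after extracting the sign $c_{a,b;q,p}(n,k)$ is a manifestly positive sum over file placements. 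In both situations the relevant upper part is the staircase $B(0,1,\dots,N-1)$ with $N=m+n$, and I would first record the two elementary weight facts that drive everything: a rook summed over a height-$h$ set of positions sitting directly on the base contributes the weight sum $[h]_{a,b;q,p}$, by \eqref{eqn:zsum}; and the height-shift identity \eqref{eqn:Wid}, namely $W_{a,b;q,p}(i+t)=W_{a,b;q,p}(i)\,W_{aq^{2i},bq^i;q,p}(t)$, which is exactly what converts weights living above level $i$ (respectively level $n$) into weights with the shifted parameters $aq^{2i},bq^i$ (respectively $aq^{2n},bq^n$).

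For the second-kind convolution \eqref{eqn:conStir2-1}, I would restrict a placement to its first $n$ columns; if this restriction leaves $i$ empty columns it contributes precisely $S_{a,b;q,p}(n,i)$, and these $i$ empty columns serve as $i$ already-available ``blocks'' for the last $m$ columns. Each of the last $m$ columns then either carries a rook in the height-$i$ set sitting on these blocks, contributing a factor summing to $[i]_{a,b;q,p}$ (there are $m-j$ such columns, giving $([i]_{a,b;q,p})^{m-j}$), or it participates in the shifted staircase built above level $i$; the latter $j$ columns, chosen in $\binom{m}{j}$ ways, form a placement counted by $S_{aq^{2i},bq^i;q,p}(j,k-i)$ after \eqref{eqn:Wid} strips a factor $W_{a,b;q,p}(i)$ off each of its $i+j-k$ rooks, producing $(W_{a,b;q,p}(i))^{i+j-k}$. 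The rook count $(n-i)+(m-j)+(i+j-k)=m+n-k$ and the empty-column count $i+(k-i)=k$ confirm the summation ranges $0\le i\le k$ and $k-i\le j\le m$. The first-kind convolution \eqref{eqn:conStir1-1} follows from the mirror-image decomposition on the file board: the first $n$ columns give $c_{a,b;q,p}(n,i)$; the upper staircase on the last $m$ columns (shifted up by $n$) gives $c_{aq^{2n},bq^n;q,p}(m,j)$ with a factor $(W_{a,b;q,p}(n))^{m-j}$ stripped from its $m-j$ upper rooks via \eqref{eqn:Wid}; and among the $j$ columns carrying no upper rook one chooses in $\binom{j}{k-i}$ ways the $k-i$ that stay empty, the remaining $i+j-k$ receiving a rook in their height-$n$ lower segment of total weight $[n]_{a,b;q,p}$, hence $([n]_{a,b;q,p})^{i+j-k}$.

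The two remaining identities \eqref{eqn:conStir2-2} and \eqref{eqn:conStir1-2} are handled by the same split, but organized around a distinguished block (equivalently, a distinguished empty column): conditioning on the $(k+1)$-st block, respectively the $(l+1)$-st new block, is what produces the shifts $aq^{2k+2},bq^{k+1}$ and $aq^{2i+2},bq^{i+1}$ and turns the combinatorial choice into the binomials $\binom{n-i}{j}$ and $\binom{j+l}{j}$; the elliptic weights attached to the distinguished block are again read off from \eqref{eqn:zsum} and \eqref{eqn:Wid}. In all four cases the signs in the first-kind statements are absorbed cleanly, because the global sign $(-1)^{\#\text{rooks}}$ has the same parity on both sides (the right-hand totals always give $m+n-k$ rooks), so that after passing to the positive numbers $c_{a,b;q,p}$ no sign-reversing involution is needed.

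The step I expect to be the main obstacle is the exact weight bookkeeping, rather than the gross combinatorial decomposition. One must verify that, after applying \eqref{eqn:Wid} to every rook lying above the splitting level, the residual factors assemble precisely into the shifted Stirling number together with the stated powers of $W_{a,b;q,p}$ and $[\,\cdot\,]_{a,b;q,p}$; and, crucially in the second-kind case, that the cancellation scheme of $\B_0^{\A_{0,1,n}}$ respects the split, i.e.\ the rooks in the first $n$ columns cancel exactly the cells that account for the $i$ blocks fed into the last $m$ columns, so that the ``old-block'' and ``new-block'' roles of the last columns are genuinely independent as the product $\binom{m}{j}([i])^{m-j}S_{aq^{2i},bq^i}(j,k-i)$ requires. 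As a safeguard, each identity can alternatively be confirmed by induction (say on $m$) using the defining recurrence $S_{n+1,k}=S_{n,k-1}+[k]_{a,b;q,p}S_{n,k}$ and its first-kind analogue $c_{n+1,k}=c_{n,k-1}+[n]_{a,b;q,p}c_{n,k}$, which reduces every inductive step to an identity among elliptic weights that follows from the addition formula \eqref{eqn:theta_addition}; I would use this inductive check to pin down precisely the powers of $W_{a,b;q,p}$ and the parameter shift predicted by the bijection.
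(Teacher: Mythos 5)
Your proposal is correct and follows essentially the same route as the paper: both split the $(m+n)$-column augmented board into its first $n$ and last $m$ columns, separate the latter into the height-$i$ (resp.\ height-$n$) lower block contributing powers of $[i]_{a,b;q,p}$ (resp.\ $[n]_{a,b;q,p}$) and a shifted staircase whose weights are renormalized via \eqref{eqn:Wid} to give the shifted Stirling number times the stated power of $W_{a,b;q,p}$, and both verify that the cancellation by the first $n-i$ rooks leaves exactly $i$ uncancelled cells in each later column. The paper likewise omits the details of \eqref{eqn:conStir2-2} and \eqref{eqn:conStir1-2}, so your treatment matches it in scope as well.
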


\begin{proof}
Recall that the elliptic extension of the Stirling numbers of the second kind
can be realized if we use the sequences $\mathcal A_{0,1,n}=(0,1,1,\dots,1)$,
$\mathcal B_0 =(0,0,\dots, 0)$ and $\overline{sgn}(i)=-1$
for all $i=1,\dots, n$ (and $sgn(i)$ does not matter since $b_i$'s are
all zero). Then 
$$S_{a,b;q,p}(n,k)=M\!R_{n-k}^{\mathcal A}
(a,b;q,p;\mathcal B_0 ^{\mathcal A_{0,1,n}};sgn,\overline{sgn})$$
where $M\!R_{n-k}^{\mathcal A} (a,b;q,p;\mathcal B ^{\mathcal A};sgn,\overline{sgn})$
is defined in \eqref{eqn:MRdef}. The weights of the cells in the
$i$-th column are $1, W_{a,b;q,p}(1),\dots, W_{a,b;q,p}(i-2)$ 
from the bottom, for each $i$. 

We prove \eqref{eqn:conStir2-1} by considering the augmented board
$\mathcal B_0 ^{\mathcal A_{0,1,m+n}}$
where $B_0$ has $m+n$ zeros. The first $n$ columns can be considered as 
$\mathcal B_0 ^{\mathcal A_{0,1,n}}$ and let us place $(n-i)$ rooks in there. 
This takes care of the factor $S_{a,b;q,p}(n,i)$. Those $(n-i)$ rooks cancel 
$(n-i)$ cells from the top in each column to the right of this board, 
and the $(n+1)$-st column has $i$ uncancelled cells.
Now we divide the rest of $\mathcal B_0 ^{\mathcal A_{0,1,m+n}}$
in two parts: a rectangular shape part with height $i$ in the bottom,
and the rest of the upper part.
Let us place $(m-j)$ rooks in the bottom rectangular part.
There are $\binom{m}{m-j}=\binom{m}{j}$ possibilities 
for choosing columns to place those $(m-j)$ rooks, and these possible
placements of $(m-j)$ rooks give the weight contribution
$([i]_{a,b;q,p}) ^{m-j}$. After considering the cancellation of the $(m-j)$
rooks, the rest of the upper part would be considered as a board 
$\mathcal B_0 ^{\mathcal A_{0,1,j}}$. We place $(j-k+i)$ rooks in this board. 
Note that the weights of the cells in this board start from $W_{a,b;q,p}(i)$
in each column and that is why we need the extra factor
$(W_{a,b;q,p}(i))^{i+j-k}$ for each rook 
and the shifts for $a$ and $b$ in $S_{a q^{2i}, b q^i;q,p}(j, k-i)$. 
See Figure~\ref{fig:Str2conv} for an example.
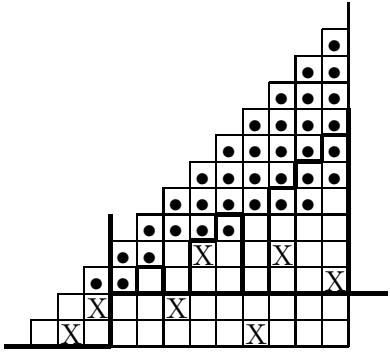
\begin{figure}[ht]
$$\begin{picture}(150,130)(0,10)
\multiput(0,10)(0,10){1}{\line(1,0){130}}
\multiput(10,20)(0,10){1}{\line(1,0){120}}
\multiput(20,30)(0,10){1}{\line(1,0){110}}
\multiput(30,40)(0,10){1}{\line(1,0){100}}
\multiput(40,50)(0,10){1}{\line(1,0){90}}
\multiput(50,60)(0,10){1}{\line(1,0){80}}
\multiput(60,70)(0,10){1}{\line(1,0){70}}
\multiput(70,80)(0,10){1}{\line(1,0){60}}
\multiput(80,90)(0,10){1}{\line(1,0){50}}
\multiput(90,100)(0,10){1}{\line(1,0){40}}
\multiput(100,110)(0,10){1}{\line(1,0){30}}
\multiput(110,120)(0,10){1}{\line(1,0){20}}
\multiput(120,130)(0,10){1}{\line(1,0){10}}
\multiput(10,10)(0,10){1}{\line(0,1){10}}
\multiput(10,10)(0,10){1}{\line(0,1){10}}
\multiput(20,10)(0,10){1}{\line(0,1){20}}
\multiput(30,10)(0,10){1}{\line(0,1){30}}
\multiput(40,10)(0,10){1}{\line(0,1){40}}
\multiput(50,10)(0,10){1}{\line(0,1){50}}
\multiput(60,10)(0,10){1}{\line(0,1){60}}
\multiput(70,10)(0,10){1}{\line(0,1){70}}
\multiput(80,10)(0,10){1}{\line(0,1){80}}
\multiput(90,10)(0,10){1}{\line(0,1){90}}
\multiput(100,10)(0,10){1}{\line(0,1){100}}
\multiput(110,10)(0,10){1}{\line(0,1){110}}
\multiput(120,10)(0,10){1}{\line(0,1){120}}
\multiput(130,10)(0,10){1}{\line(0,1){130}}
%%%%%%%%%%%%%%%%%%%%%%%%%%%%%%%%%%%%%%%%%%%5
\put(21,11){X}
\put(31,21){X}
\put(61,21){X}
\put(91,11){X}
\put(71,41){X}
\put(101,41){X}
\put(121,31){X}
\put(32,31){$\bullet$}
\put(42,41){$\bullet$}
\put(52,51){$\bullet$}
\put(62,61){$\bullet$}
\put(72,71){$\bullet$}
\put(82,81){$\bullet$}
\put(92,91){$\bullet$}
\put(102,101){$\bullet$}
\put(112,111){$\bullet$}
\put(122,121){$\bullet$}
\put(42,31){$\bullet$}
\put(52,41){$\bullet$}
\put(62,51){$\bullet$}
\put(72,61){$\bullet$}
\put(82,71){$\bullet$}
\put(92,81){$\bullet$}
\put(102,91){$\bullet$}
\put(112,101){$\bullet$}
\put(122,111){$\bullet$}
\put(72,51){$\bullet$}
\put(82,61){$\bullet$}
\put(92,71){$\bullet$}
\put(102,81){$\bullet$}
\put(112,91){$\bullet$}
\put(122,101){$\bullet$}
\put(102,71){$\bullet$}
\put(112,81){$\bullet$}
\put(122,91){$\bullet$}
\put(82,51){$\bullet$}
\put(92,61){$\bullet$}
\put(102,61){$\bullet$}
\put(112,71){$\bullet$}
\put(122,81){$\bullet$}
\put(112,61){$\bullet$}
\put(122,71){$\bullet$}
%\put(3,8){$\underbrace{~\quad\qquad}$}
%\put(18, -5){$n$}
%\put(130, 15){$\left. \right\}$}
\thicklines\linethickness{1.3pt}
\multiput(40,10)(0,10){1}{\line(0,1){50}}
\multiput(0,10)(0,10){1}{\line(1,0){40}}
\multiput(40,30)(0,10){1}{\line(1,0){105}}
\multiput(50,30)(10,0){2}{\line(0,1){10}}
\multiput(50,40)(10,0){1}{\line(1,0){10}}
\multiput(70,30)(10,0){1}{\line(0,1){20}}
\multiput(80,50)(10,0){1}{\line(0,1){10}}
\multiput(90,30)(10,0){1}{\line(0,1){30}}
\multiput(70,50)(10,0){1}{\line(1,0){10}}
\multiput(80,60)(10,0){1}{\line(1,0){10}}
\multiput(100,30)(10,0){1}{\line(0,1){40}}
\multiput(130,30)(10,0){1}{\line(0,1){70}}
\multiput(110,70)(10,0){1}{\line(0,1){10}}
\multiput(120,80)(10,0){1}{\line(0,1){10}}
\multiput(100,70)(10,0){1}{\line(1,0){10}}
\multiput(110,80)(10,0){1}{\line(1,0){10}}
\multiput(120,90)(10,0){1}{\line(1,0){10}}
\end{picture}$$
\caption{An example of partitioning the board $\mathcal B_0 ^{\mathcal A_{0,1,m+n}}$
for $m=9$, $n=4$, $i=2$, $j=7$ and $k=3$.}\label{fig:Str2conv}
\end{figure}
The convolution in
\eqref{eqn:conStir2-2} can be explained similarly and so we do not
provide details.

For the elliptic extension of the Stirling numbers of the first kind
$s_{a,b;q,p}(n,k)$, we considered the augmented board
$\mathcal B_{0,1,n}^{\mathcal A_0}$, where $B_{0,1,n}=(0,1,2,\dots, n-1)$,
$\mathcal A_0=(0,0,\dots, 0)$ and the sign function $sgn(i)=-1$ for all
$i=1,\dots, n$. The weights of the cells in the $i$-th column 
are $-1, -W_{a,b;q,p}(1),\dots, -W_{a,b;q,p}(i-2)$ from the bottom, 
and set 
$$s_{a,b;q,p}(n,k)=M\!R_{n-k}^{\mathcal A} (a,b;q,p;
\mathcal B_{0,1,n}^{\mathcal A_0};sgn,\overline{sgn}).$$
Since $c_{a,b;q,p}(n,k)=(-1)^{n-k}s_{a,b;q,p}(n,k)$, we can just think that
the weight of the cells in the $i$-th column are
$1, W_{a,b;q,p}(1),\dots, W_{a,b;q,p}(i-2)$ from the bottom 
in the same augmented board and set 
$$c_{a,b;q,p}(n,k)=M\!R_{n-k}^{\mathcal A} (a,b;q,p;
\mathcal B_{0,1,n}^{\mathcal A_0};sgn,\overline{sgn}).$$
Note that since there are no cells in the upper augmented part,
namely $a_i=0$ for all $i$, the rooks placed in
this board do not cancel any other cells. 

Now we prove \eqref{eqn:conStir1-1}.
We consider $\mathcal B_{0,1,m+n}^{\mathcal A_0}$ and
partition the augmented board in three parts: 
the staircase board of the first $n$-columns $\mathcal B_{0,1,n}^{\mathcal A_0}$,
the rectangular shape board of size $[n]\times [m]$ to the right and bottom
of this first part, and the staircase board $\mathcal B_{0,1,m}^{\mathcal A_0}$
which sits on top of the rectangular shape board. We place $(n-i)$ rooks
in the first part and this procedure contributes $c_{a,b;q,p}(n,i)$ factor.
Then we move to the third part and place $(m-j)$ rooks in
$\mathcal B_{0,1,m}^{\mathcal A_0}$. Since the weight of the bottom cell is
$W_{a,b;q,p}(n)$ in each column of this board, 
we have to replace $a$ and $b$ by $aq^{2n}$ and $bq^n$, respectively, and 
the possible placements of $(m-j)$-rooks in the third part give
$(W_{a,b;q,p}(n))^{m-j}c_{a q^{2n}, b q^n;q,p}(m,j)$.
After placing $(m-j)$ rooks in the upper staircase board part,
there are $j$ many empty columns 
in the rectangular shape part. We place $(j-k+i)$ rooks there.
There are $\binom{j}{j+i-k}=\binom{j}{k-i}$
choices for choosing the columns to place rooks and those rook placements
contribute $([n]_{a,b;q,p})^{j-k+i}$ to the weight sum. This explains all the
terms in \eqref{eqn:conStir1-1}.
A similar argument applies to \eqref{eqn:conStir1-2}
for which we omit the details.
\end{proof}

%%%%%%%%%%%%%%%%%%%%%%%%%%%%%%%%%%%%%%%%%%%%%%%%%%%%%%%%%%%%%%%%%%%%%%%

\subsection{Elliptic extension of the $\alpha$-parameter model}

%%%%%%%%%%%%%%%%%%%%%%%%%%%%%%%%%%%%%%%%%%%%%%%%%%%%%%%%%%%%%%%%%%%%%%%
In \cite{GH}, Goldman and Haglund introduced generalized rook models, called 
\emph{$i$-creation model} and \emph{$\alpha$-parameter model}, which we briefly 
review first. Given a Ferrers board $B$ and a file placement
$P\in \mathcal F_k (B)$, 
we assign weights to the rows containing rooks as follows.
If there are $u$ rooks in a given 
row, then the weight of this row is 
$$
\begin{cases}
1 & \text{ if } 0\le u\le 1,\\
\alpha(2 \alpha-1)(3\alpha -2)\cdots ((u-1)\alpha -(u-2)),&  \text{ if } u\ge 2.
\end{cases}
$$
The weight of a placement $P$, $wt(P)$, is the product of the weights of
all the rows. Then for a Ferrers board $B$, set 
$$r_k ^{(\alpha)}(B)=\sum_{P\in \mathcal F_k (B)}wt(P).$$
Note that for $\alpha =0$, $r_k ^{(0)}(B)$ reduces to the original rook number.
If $\alpha$ is a positive integer $i$, $r_k ^{(i)}(B)$ is the \emph{$i$-creation}
rook number which counts the number of $i$-creation rook placements of
$k$ rooks on $B$. The $i$-creation rook placement is defined as follows:
we first choose the columns to place the rooks. Then as we place rooks
from left to right, each time a rook is placed, $i$ new rows are created
drawn to the right end and immediately above where the rook is placed. 

In this setting Goldman and Haglund \cite{GH} proved the
\emph{$\alpha$-factorization theorem}. 
Given a Ferrers board $B=B(b_1,\dots, b_n)$,
\begin{equation}\label{eqn:alpha}
\prod_{j=1}^n (z+b_j +(j-1)(\alpha -1)) 
= \sum_{k=0}^n r_k ^{(\alpha)}(B)z (z+\alpha -1)\cdots (z+(n-k-1)(\alpha -1)).
\end{equation}
Goldman and Haglund also defined the $q$-analogue of $r_k ^{(\alpha)}(B)$ by 
assigning $q$-weights to the cells in $B$. For a cell $c\in B$, let $v(c)$ be
the number of rooks strictly to the left of, and in the same row as $c$.
Then define the weight of $c$ to be 
$$
wt(c)=
\begin{cases}
 1,& \text{ if there is a rook above and in the same column as $c$,}\\
 [(\alpha -1)v(c)+1]_q, & \text{ if $c$ contains a rook,}\\
 q^{(\alpha -1)v(c) +1},& \text{ otherwise.}
\end{cases}
$$
For a placement $P\in \mathcal F_k (B)$, define the weight of $P$ to be 
$$wt^{(\alpha)}(P) = \prod_{c\in B}wt(c)$$
and 
$$r_k ^{(\alpha)}(q;B)=\sum_{P\in \mathcal F_k (B)}wt^{(\alpha)}(P).$$
Then one has the $q$-analogue of the $\alpha$-factorization theorem 
\begin{equation}\label{eqn:qalpha}
\prod_{j=1}^n [z+b_j +(j-1)(\alpha -1)]_q 
= \sum_{k=0}^n r_{n-k} ^{(\alpha)}(q;B)[z]_q [z+\alpha -1]_q\cdots
[z+(k-1)(\alpha -1)]_q.
\end{equation}

Now, we construct an elliptic extension of \eqref{eqn:qalpha}. Set
$\A_{\alpha,n} =(0, \alpha-1,\dots, \alpha-1)$,
$\B_{\alpha,n} =(b_1, b_2+\alpha-1, b_3 +2(\alpha -1),\dots, b_n+
(n-1)(\alpha -1))$, for $b_1\le b_2\le \cdots \le b_n$, and
$sgn(i)=\overline{sgn}(i)=1$ for all $i=1,\dots, n$.  
In this setting, \eqref{eqn:elptMR} becomes 
\begin{multline}
\prod_{j=1}^n [z+b_j+(j-1)(\alpha -1)]_{aq^{-2(b_j+(j-1)(\alpha-1))},
bq^{-b_j -(j-1)(\alpha-1)};q,p}\\
=\sum_{k=0}^n R_{n-k}^\A (a,b;q,p;
\B_{\alpha,n}^{\A_\alpha,n},sgn,\overline{sgn})
\prod_{i=1}^k[z+(i-1)(\alpha -1)]_{aq^{-2(i-1)(\alpha-1)},bq^{-(i-1)(\alpha-1)};q,p}. 
\end{multline}
The coefficient $R_{k}^\A (a,b;q,p;\B_{\alpha,n}^{\A_\alpha,n},sgn,\overline{sgn})$
can be considered 
as an elliptic extension of $r_k ^{(\alpha)}(q;B)$. 

\begin{remark}
In \cite{GH}, Goldman and Haglund mentioned that in the case
$\alpha=2$ and the staircase board $St_n =B(0,1,2,\dots, n-1)$, 
$$r_k ^{(2)}(q;St_n)=q^{\binom{n-k}{2}}\begin{bmatrix}n+k-1\\2k\end{bmatrix}_q
\prod_{j=1}^k [2j-1]_q.$$
If we set $\A_{2,n} =(0,1,\dots, 1)$, $\B_{2,n}=(0,2,4,\dots, 2(n-1))$ and
$sgn(i)=\overline{sgn}(i)=1$, for all $i=1,\dots, n$, then 
$R_{k}^\A (a,b;q,p;\B_{2,n}^{\A_{2,n}},sgn,\overline{sgn})$ is an
elliptic extension of 
$r_k ^{(2)}(q;St_n)$. Unfortunately
$R_{k}^\A (a,b;q,p;\B_{2,n}^{\A_{2,n}},sgn,\overline{sgn})$ 
does not have a closed form.
However, in the limiting case $p\to 0$ and $b\to 0$ in this order, we have
\begin{multline}\label{eqn:alpha=2}
R_{k}^\A (a,0;q,0;\B_{2,n}^{\A_{2,n}},sgn,\overline{sgn})\\
=q^{-\binom{n+k}{2}+k(k+2)}\begin{bmatrix}n+k-1\\2k\end{bmatrix}_q
\prod_{j=1}^k [2j-1]_q
\frac{(a q^{3-2n+2k};q^2)_{n-k} (a q^{1-2n};q^2)_k}{(a q^{5-4n};q^4)_n}.
\end{multline}
Note that by dividing the cases when there is a rook in the last column
or not, we can derive the following recursion for
$R_{k}^\A (a;q;\B_{2,n}^{\A_{2,n}},sgn,\overline{sgn}):=
R_{k}^\A (a,0;q,0;\B_{2,n}^{\A_{2,n}},sgn,\overline{sgn})$ 
\begin{multline}\label{a-case-rec}
 R_{k}^\A (a;q;\B_{2,n}^{\A_{2,n}},sgn,\overline{sgn})= 
 \frac{W_{a;q}(-n+k+1)}{W_{a;q}(2-2n)}
R_{k}^\A (a;q;\B_{2,n-1}^{\A_{2,n-1}},sgn,\overline{sgn})\\
 +\frac{W_{a;q}(2-2n)[n+k-2]_{aq^{2(2-2n)};q}}{W_{a;q}(2-2n)}
R_{k-1}^\A (a;q;\B_{2,n-1}^{\A_{2,n-1}},sgn,\overline{sgn}).
\end{multline}
Here 
$$W_{a;q}(k)=\frac{(1-a q^{1+2k})}{(1-aq)}q^{-k},\qquad [n]_{a;q}=
\frac{(1-q^n)(1-a q^n)}{(1-q)(1-aq)}q^{1-n}.$$
The coefficients $W_{a;q}(-n+k+1)$ in the first term and $W_{a;q}(2-2n)$
in the denominators in both terms are from the extra factors multiplied
to $M\!R_{k}^{\A}(a;q;\B^{\A},sgn,\overline{sgn})$ in the definition of
$R_{k}^\A (a;q;\B_{2,n}^{\A_{2,n}},sgn,\overline{sgn})$ in \eqref{eqn:Rdef},
and the factor $W_{a;q}(2-2n)[n+k-2]_{aq^{2(2-2n)};q}$ in the second term
comes from placing a rook in the last column of $\B_{2,n}^{\A_{2,n}}$.
The weights assigned to the cells in the base part are 
$$W_{a;q}(-1),W_{a;q}(-2),\dots, W_{a;q}(2-2n)$$
reading from the bottom, and the weights of the cells in the upper
augmented part are 
$$-W_{a;q}(1-n), -W_{a;q}(2-n),\dots, -W_{a;q}(-2),-W_{a;q}(-1)$$
reading from the top. Since the $(k-1)$ rooks placed in the left
$(n-1)$ columns cancel $(k-1)$ cells from the top, the last rook can
be placed in the cells in the base part and the $(n-k)$ cells 
in the upper augmented part, counting from the bottom.
Then the sum of the weights is 
\begin{align*}
 &(W_{a;q}(-1)+W_{a;q}(-2)+\cdots+W_{a;q}(2-2n))\\
 &\quad +(-W_{a;q}(-1)-W_{a;q}(-2)-\cdots - W_{a;q}(-n+k))\\
 =& ~ W_{a;q}(-n+k-1)+\cdots +W_{a;q}(2-2n)\\
 =& ~ W_{a;q}(2-2n)(1+W_{aq^{2(2-2n)};q}(1)+\cdots +W_{aq^{2(2-2n)};q}(n+k-3))\\
 =& ~W_{a;q}(2-2n) [n+k-2]_{aq^{2(2-2n)};q}.
\end{align*}
Note that we used the identity \eqref{eqn:Wid}.
Finally, by appealing to the explicit recursion for
$R_{k}^\A (a;q;\B_{2,n}^{\A_{2,n}},sgn,\overline{sgn})$ in \eqref{a-case-rec},
the formula \eqref{eqn:alpha=2} is readily proved by induction.
\end{remark}

%%%%%%%%%%%%%%%%%%%%%%%%%%%%%%%%%%%%%%%%%%%%%%%%%%%%%%%%%%%%%%%%%%%%%%%5

\bibliographystyle{alpha}
% use the following instead if you encounter problems 
%\bibliographystyle{alpha}
\bibliography{rook}
\label{sec:biblio}

%%%%%%%%%%%%%%%%%%%%%%%%%%%%%%%%%%%%%%%%%%%%%%%%%%%%%%%%%%%%%%%%%%%%%%%5

\end{document}